\documentclass[a4paper, 11pt, oneside]{amsart}

\usepackage[english]{babel}
\usepackage{amsthm}
\usepackage{amsmath}
\usepackage{amssymb}
\usepackage[ansinew]{inputenc}
\usepackage{enumerate}
\usepackage{graphicx}
\usepackage[english,intoc]{nomencl}
\usepackage{fancyhdr}
\usepackage[titletoc]{appendix}
\usepackage[Lenny]{fncychap}
\usepackage{xcolor}   
\usepackage{hyperref}
\hypersetup{
    colorlinks=false, 
    linktoc=page,     
    linkbordercolor=blue,  
    pdfborderstyle={/S/U/W 1}    }   
\usepackage{geometry}
\usepackage{tikz}
\usetikzlibrary{arrows}

\DeclareMathOperator{\Tr}{Tr}
\DeclareMathOperator{\id}{id}
\DeclareMathOperator{\spann}{span}
\DeclareMathOperator{\Hom}{Hom}

\DeclareMathOperator{\Aut}{Aut}
\DeclareMathOperator{\Pol}{Pol}
\DeclareMathOperator{\supp}{supp}

\title{A note on reduced and von Neumann algebraic free wreath products}
\author{Jonas Wahl*}
\thanks{*KU~Leuven, Department of Mathematics, Leuven (Belgium), jonas.wahl@wis.kuleuven.be \\
    Partly supported by ERC Advanced Grant Non-Commutative Distributions in Free Prabability and ERC Consolidator Grant 614195 from the European Research Council under the European Union's Seventh Framework Programme.}

\newtheorem*{thm*}{Theorem}
\newtheorem*{prop*}{Proposition}
\newtheorem*{lem*}{Lemma}
\newtheorem*{cor*}{Corollary}
\newtheorem{thmx}{Theorem}
\newtheorem{thm}{Theorem}[section]
\newtheorem{lem}[thm]{Lemma}
\newtheorem{cor}[thm]{Corollary}
\newtheorem{prop}[thm]{Proposition}

\theoremstyle{definition}
\newtheorem{case}{Case}
\newtheorem{definition}[thm]{Definition}
\newtheorem{rem}[thm]{Remark}

\newtheorem{notation}[thm]{Notation}
\newtheorem*{ack}{Acknowledgements}

\begin{document}

\begin{abstract}
In this paper, we study operator algebraic properties of the reduced and von Neumann algebraic versions of the free wreath products $\mathbb G \wr_* S_N^+$, where $\mathbb G$ is a compact matrix quantum group. Based on recent result on their corepresentation theory by Lemeux and Tarrago in \cite{LemTa}, we prove that $\mathbb G \wr_* S_N^+$ is of Kac type whenever $\mathbb G$ is, and that the reduced version of $\mathbb G \wr_* S_N^+$ is simple with unique trace state whenever $N \geq 8$. Moreover, we prove that the reduced von Neumann algebra of $\mathbb G \wr_* S_N^+$ does not have property $\Gamma$.
\end{abstract}
\maketitle
\section*{Introduction}
Following the introduction of \emph{compact matrix quantum groups (CMQGs)} by Woronowicz in \cite{Wor87}, many fascinating examples have been discovered and studied from different points of view. Many CMQGs have a rich combinatorial structure encoded in their corepresentation theory (see for instance \cite{BanSp}), and many give rise to new examples of $C^*$- and von Neumann algebras, some of which have been analysed in the works of Banica (\cite{Ban97}), Vaes and Vergnioux (\cite{VaVer}), Brannan (\cite{Bra1}, \cite{Bra2}) and others. In this paper, we deal with a class of CMQGs called \emph{free wreath products} which has been introduced by Bichon in \cite{Bic04}. This article is an immediate follow-up to the works of Lemeux and Tarrago (\cite{Lem2}, \cite{LemTa}). \\
Consider a finite directed graph $\mathcal G = (V,E)$ consisting of a finite set of vertices $V$ and a set of edges $E \subset V \times V$ and let $\Aut(\mathcal G)$ denote its automorphism group, i.e. the group of bijections $\sigma: V \to V$ such that $(v,w) \in E$ if and only if $(\sigma(v),\sigma(w)) \in E$. It is a very natural question to ask if and how one can deduce the automorphism group $\Aut(\mathcal G^{\sqcup N})$ of the disjoint union $\mathcal G^{\sqcup N} = (V^{\sqcup N}, E^{\sqcup N}) $
of $N$ copies of $\mathcal G$ from  the original automorphism group $\Aut(\mathcal G)$. If the graph $\mathcal G$ is connected, the relation between $\Aut(\mathcal G^{\sqcup N})$ and $\Aut(\mathcal G)$ can be nicely described using a well known group construction called the (classical) wreath product: \\
If $G$ is a group and $S_N$ denotes the permutation group acting on $N$ points $( N \in \mathbb N_{\geq 1})$, the \emph{wreath product} $G \wr S_N$ is defined as the semidirect product $G^N \rtimes_{\varphi} S_N$ where
\[ \varphi: S_N \to \Aut(G^N), \ \varphi(\sigma)(g_1,\dots,g_N) = (g_{\sigma(1)}, \dots, g_{\sigma(N)}).  \]
Using this construction, one has 
\[ \Aut(\mathcal G^{\sqcup N}) = \Aut(\mathcal G) \wr S_N \]
for a finite connected graph $\mathcal G$. For example, the automorphism group of the graph

\begin{center}
\begin{tikzpicture}
\tikzset{vertex/.style = {draw, shape=circle,fill=black,scale=0.5}}
\tikzset{edge/.style = {->,> = latex', thick}}
\node[vertex] (a) at (3,0) {};
\node[vertex] (b) at (5,0) {};
\node[vertex] (c) at (6,0) {};
\node[vertex] (d) at (8,0) {};
\draw[edge] (a)  to[bend left] (b);
\draw[edge] (b)  to[bend left] (a);
\draw[edge] (c)  to[bend left] (d);
\draw[edge] (d)  to[bend left] (c);
\end{tikzpicture}
\end{center}
is $\mathbb Z/ 2 \mathbb Z \wr S_2$ and, more general, the isometry group of a hypercube in $\mathbb R^N$ is $\mathbb Z/ 2 \mathbb Z \wr S_N$. \\
In \cite{Bic03}, Bichon introduced a quantum group analogue of the automorphism group of a finite graph $\mathcal G$, say $A_{aut}(\mathcal G)$, and in \cite{Bic04} he constructed a free wreath product $\wr_*$ that yields a similar description as in the classical case, i.e.
\[ A_{aut}(\mathcal G^{\sqcup N}) = A_{aut}(\mathcal G) \wr_* S_N^+, \]
if $\mathcal G$ is connected. Here, the classical permutation group $S_N$ is replaced by the \emph{quantum permutation group} $S_N^+$ which was introduced by Wang in \cite{Wa}.
The free wreath product of a CMQG $\mathbb G = (A, (u_{ij}))$ by the quantum permutation group $S_N^+ = (C(S_N^+), \Delta_{S_N^+})$ is a quotient of the free product $A^{*N} * C(S_N^+)$ and is therefore neither a free product nor a tensor product.
It is a fundamental result by Woronowicz \cite{Wor87} that a CMQG always comes with a unique invariant state, called the \emph{Haar state} which one can use to obtain a reduced version $C^*_r(\mathbb G)$ of a CMQG $\mathbb G$. In the preliminary section of this article, these concepts will be discussed in more detail. The main theorem of the article is the following:

\begin{thmx} \label{maintheorem}
Let $\mathbb G$ be a CMQG with tracial Haar state. Then, for arbitrary $N \in \mathbb N$, the Haar state of the free wreath product $\mathbb G \wr_* S_N^+$ is tracial as well. Moreover, if $N \geq 8$, the reduced $C^*$-algebra $ C^*_r(\mathbb G \wr_* S_N^+)$ is simple with unique trace and its envelopping von Neumann algebra $L^{\infty}(\mathbb G \wr_* S_N^+)$ is a $II_1$-factor which does not have property $\Gamma$.
\end{thmx}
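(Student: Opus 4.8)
I would prove the three assertions in turn, relying throughout on the fusion rules of $\mathbb G\wr_* S_N^+$ computed by Lemeux and Tarrago in \cite{LemTa}. For the traciality claim, recall that a compact matrix quantum group with fundamental corepresentation $w$ is of Kac type — equivalently, has tracial Haar state — precisely when the conjugate $\bar w=(w_{ij}^{*})$ is again unitary. The free wreath product $\mathbb G\wr_* S_N^+$ has fundamental corepresentation $w$ with entries $w_{(i,k),(j,l)}=\nu_k(u_{ij})\,v_{kl}$, where $u=(u_{ij})$ is the fundamental corepresentation of $\mathbb G$, $(v_{kl})$ the magic unitary of $S_N^+$ and $\nu_k\colon A\to A^{*N}$ the $k$-th inclusion. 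Using the magic unitary relations together with Bichon's defining relations of the free wreath product — which make $\nu_k(A)$ commute with the appropriate entries of $v$ — one checks directly that $w^{*}w=ww^{*}=1$, with no hypothesis on $\mathbb G$. Performing the same manipulations on $\bar w^{*}\bar w$ and $\bar w\bar w^{*}$, every term collapses to an expression $\nu_k\!\big(\sum_i u_{ij}u_{ij'}^{*}\big)$ or $\nu_k\!\big(\sum_i u_{ij}^{*}u_{ij'}\big)$, which equals the appropriate Kronecker delta because $\bar u$ is unitary, $\mathbb G$ being of Kac type. Hence $\bar w$ is unitary and $\mathbb G\wr_* S_N^+$ is of Kac type, so its Haar state $h$ is a faithful trace on $C^{*}_r(\mathbb G\wr_* S_N^+)$ and extends to a faithful normal trace on $L^{\infty}(\mathbb G\wr_* S_N^+)$.

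For the $C^{*}$-algebraic statements, first observe that $\mathbb G\wr_* S_N^+$ is non-amenable for $N\geq 5$: there is a surjective Hopf $*$-morphism $C(\mathbb G\wr_* S_N^+)\to C(S_N^+)$, so $\widehat{S_N^+}$ is a quotient discrete quantum group of $\widehat{\mathbb G\wr_* S_N^+}$, and $\widehat{S_N^+}$ is non-amenable for $N\geq 5$. The core of the argument is a Powers-type averaging: given $a\in\Pol(\mathbb G\wr_* S_N^+)$ with $h(a)=0$, I would use the fusion rules of \cite{LemTa} to produce, for each $\varepsilon>0$, finitely many unitaries $w_1,\dots,w_n\in C^{*}_r(\mathbb G\wr_* S_N^+)$, built from matrix coefficients of well-chosen irreducible corepresentations, with $\big\|\tfrac1n\sum_j w_j a w_j^{*}\big\|<\varepsilon$. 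The mechanism is that conjugating by coefficients of an irreducible $\alpha$ transports the isotypical parts of $a$ into the summands of $\alpha\beta\bar\alpha$; since the fusion ring is of free-product type, these parts become asymptotically orthogonal as $\alpha$ ranges over a suitable sequence, which drives the norm of the average to $0$ — exactly the scheme used by Banica for $O_N^+$ and by Brannan for $S_N^+$, with $N\geq 8$ the threshold at which the relevant quantum dimensions grow fast enough for the quantitative estimate, as in the $S_N^+$ case (cf.\ \cite{Bra2}). Extended to $C^{*}_r(\mathbb G\wr_* S_N^+)$ by density, this is the Dixmier property, which together with faithfulness of $h$ shows at once that $h$ is the unique tracial state and that every non-zero closed two-sided ideal contains an invertible element, so $C^{*}_r(\mathbb G\wr_* S_N^+)$ is simple. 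A normal trace on $L^{\infty}(\mathbb G\wr_* S_N^+)$ is determined by its restriction to the weakly dense subalgebra $C^{*}_r(\mathbb G\wr_* S_N^+)$, whence the trace on $L^{\infty}(\mathbb G\wr_* S_N^+)$ is unique too; a non-trivial central projection would split it into two distinct normal traces, so $L^{\infty}(\mathbb G\wr_* S_N^+)$ is a factor, of type $II_1$ since $\mathbb G\wr_* S_N^+$ has infinitely many irreducible corepresentations.

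It remains to exclude property $\Gamma$, for which I would prove a spectral gap estimate: there exist finitely many self-adjoint elements $a_1,\dots,a_m\in L^{\infty}(\mathbb G\wr_* S_N^+)$ — for instance characters of a generating family of corepresentations — and a constant $C>0$ with $\sum_i\|[a_i,x]\|_2^{2}\geq C\,\|x-h(x)1\|_2^{2}$ for all $x$. In the Peter--Weyl decomposition of $L^2(\mathbb G\wr_* S_N^+)$ the left-hand side becomes an explicit positive operator whose matrix entries are read off from the fusion rules, and the free-product structure of the Lemeux--Tarrago fusion ring forces it to be bounded below by a positive constant off the trivial isotypical block; since a spectral gap precludes property $\Gamma$ (any bounded central sequence is asymptotically scalar), this completes the proof. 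The main obstacle, both here and in the Powers step, is the quantitative input from the fusion rules — that non-trivial irreducibles decouple sufficiently under tensoring, uniformly in $\mathbb G$ — which has to be extracted from the combinatorial description in \cite{LemTa}; this is where the bulk of the technical work lies. Should that analysis additionally provide a monoidal equivalence of $\mathbb G\wr_* S_N^+$ with a free product of better-understood quantum groups (or a free-product decomposition of $L^{\infty}(\mathbb G\wr_* S_N^+)$), these conclusions could instead be imported from the structure theory of free products.
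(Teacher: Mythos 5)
Your first step is correct and is a genuinely different (and arguably slicker) route than the paper's: instead of verifying $\kappa^2=\id$ on the coefficients of the irreducibles $(\nu_i(a_{kl})u_{ij})$, you check directly that the conjugate of the fundamental corepresentation $(\nu_i(v_{kl})u_{ij})$ of $\mathbb G\wr_* S_N^+$ is unitary, which the magic-unitary and commutation relations reduce to the unitarity of $\bar v$, i.e.\ to $\mathbb G$ being of Kac type. That computation goes through, and the criterion ``$\bar u$ unitary for a generating unitary $u$ implies Kac'' is standard (it forces the Woronowicz character $f_1$ to equal $\epsilon$), though you should cite it.

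The rest of the proposal has a genuine gap, occurring at the same point in both the simplicity/unique-trace argument and the property $\Gamma$ argument. You justify a global Dixmier-type averaging (respectively a global spectral-gap inequality) by asserting that ``the fusion ring is of free-product type,'' so that conjugation by suitable irreducibles makes the relevant isotypical components asymptotically orthogonal. But the fusion semiring of $\mathbb G\wr_* S_N^+$ contains, as the sub-semiring spanned by the words $1_{\mathbb G}^k$ in the trivial corepresentation of $\mathbb G$, an exact copy of $Irr(S_N^+)$ with its $SO(3)$-like rules $\omega(1_{\mathbb G}^{k-1})\otimes\omega(1_{\mathbb G})=\omega(1_{\mathbb G}^{k})\oplus\omega(1_{\mathbb G}^{k-1})\oplus\omega(1_{\mathbb G}^{k-2})$. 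On that part there is no free-product orthogonality at all: the sets $y\circ D$ are overlapping intervals, so the disjointness hypotheses of Banica's averaging cannot be arranged, and the quantitative work there is precisely Brannan's theorem for $S_N^+$, which your orthogonality mechanism cannot reproduce. As stated, your scheme does not handle trace-zero elements (or central sequences) supported on this copy of $Irr(S_N^+)$. The missing structural idea is the decomposition $C^*_r(\mathbb G\wr_* S_N^+)=\overline{\mathcal E}\oplus\overline{\mathcal S}$, where $\overline{\mathcal E}\cong C^*_r(S_N^+)$ is generated by the coefficients of the $\omega(1_{\mathbb G}^k)$, together with the trace-preserving conditional expectation $P$ onto $\overline{\mathcal E}$: one imports Brannan's simplicity, unique trace and fullness of $S_N^+$ wholesale for the $\mathcal E$-part (this is the only place the threshold $N\geq 8$ enters, via the ideal $P(J)\triangleleft\overline{\mathcal E}$ and the restriction of traces and central sequences to $\overline{\mathcal E}$), and runs the Powers--Banica averaging, respectively the $14\varepsilon$-type commutator estimate with the three unitaries $\omega(\alpha)$, $\omega(1_{\mathbb G}^2)$, $\omega(1_{\mathbb G}^4)$, only on elements supported on words containing a letter $\neq 1_{\mathbb G}$, where the combinatorial disjointness lemmas do hold and your free-product intuition is accurate. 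Finally, no free-product decomposition of $L^{\infty}(\mathbb G\wr_* S_N^+)$ is available to shortcut this, so the fallback you mention at the end cannot be invoked.
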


The proof of Theorem \ref{maintheorem} is in large parts an adaption of Lemeux's ideas in \cite{Lem2} to this more general situation. It is also heavily based on results and ideas of Powers (\cite{Pow75}), Banica (\cite{Ban97}) and Brannan (\cite{Bra2}).
\begin{ack}
This article is part of the authors master's thesis done under the supervision of Roland Speicher and Moritz Weber. The author wishes to thank them for their patience and helpful suggestions. Further thanks go to Fran\c{c}ois Lemeux and Pierre Tarrago for useful conversations on the subject.
\end{ack}

\section{Preliminaries}
\subsection{Compact Matrix Quantum Groups} We will summarise the basic facts on CMQGs that we will need throughout this paper. For a more detailed introduction to the subject the author recommends the excellent book \cite{tim}.

\begin{definition}[\cite{Wor87}]
A \emph{compact matrix quantum group} $\mathbb G = (A, u)$ is a unital $C^*$-algebra $A$ together with a unitary matrix $u = (u_{ij})_{1 \leq i,j \leq N} \in M_N(A), \ N \geq 1$, such that
\begin{enumerate}[(1)]
\item the elements $u_{ij}, \ 1 \leq i,j \leq N, $ generate $A$ as a $C^*$-algebra,
\item the conjugate $\bar{u}$ of $u$ is invertible,
\item there is a $*$-homomorphism $\Delta: A \to A \otimes A$ with 
$ \Delta(u_{ij}) = \sum_{k=1}^N u_{ik} \otimes u_{kj}$ for all $1 \leq i,j \leq N. $
\end{enumerate}

The matrix $u$ is called the \emph{fundamental corepresentation} of $\mathbb G$ and the $*$-homomorphism $\Delta$ is called the \emph{comultiplication}. We will often denote the $C^*$-algebra $A$ by $A = C(\mathbb G)$.
\end{definition}
A CMQG $\mathbb G = (A, u)$ always admits a \emph{Haar state} $h$ which is a state $h: A \to \mathbb C$ that is uniquely determined by the invariance condition
\[ (h \otimes \id_A) \Delta(a) = (\id_A \otimes h) \Delta(a) = h(a) 1_A \ \ (a \in A). \]
We denote the Hilbert space obtained from the GNS-construction with respect to $h$ by $L^2(\mathbb G)$  and the corresponding GNS-representation by $\pi_h: C(\mathbb G) \to B(L^2(\mathbb G))$. The pair $\mathbb G_r = (C^*_r(\mathbb G), \pi_h(u))$, where $C^*_r(\mathbb G) = \pi_h(A) \subset B(L^2(\mathbb G)) $ and $\pi_h(u)_{ij} = \pi_h(u_{ij})$, is a CMQG as well and its comultiplication $\Delta_r: C^*_r(\mathbb G) \to C^*_r(\mathbb G) \otimes C^*_r(\mathbb G)$ is related to the comultiplication $\Delta$ of $\mathbb G$ by $\Delta = \Delta_r \circ \pi_h$. $\mathbb G_r$ is called the \emph{reduced version} of $\mathbb G$ and its Haar state $h_r$ is given by $h = h_r \circ \pi_h$. Moreover, the envelopping von Neumann algebra of $C^*_r(\mathbb G)$ is denoted by $L^{\infty}(\mathbb G)$. An important example of a CMQG is the \emph{quantum permutation group} $S_N^+$. From the viewpoint of free probability theory, it is an appropriate free version of the usual permutation group $S_N$ in the sense that there is a de Finetti-type characterisation of free independence in terms of "permutation" by $S_N^+$ (see \cite{KoSp}). It is defined in the following way:

\begin{definition} \label{quantumperm}
Let $N \in \mathbb N$ and let $C(S_N^+)$ be the universal unital $C^*$-algebra with generators $u_{ij}, \ 1 \leq i,j \leq N$ satisfying the following relations:
\begin{enumerate}[(1)]
\item $u_{ij}$ is a projection for all $1 \leq i,j \leq N$,
\item for every $1 \leq i \leq N$ the projections $u_{i1},\dots,u_{iN}$ are orthogonal and $\sum_{j=1}^N u_{ij} = 1$, 
\item for every $1 \leq j \leq N$ the projections $u_{1j},\dots,u_{Nj}$ are orthogonal and $\sum_{i=1}^N u_{ij} = 1$.
\end{enumerate}
The pair $S_N^+ =(A_s(N), (u_{ij}))$ is a CMQG called the \emph{quantum permutation group}.
\end{definition}

\subsection{Free wreath products}
In this paper, we are mainly interested in CMQGs arising as a free wreath product of a CMQG $\mathbb G$ by the quantum permutation group $S_N^+$. The free wreath product construction was introduced by Bichon in \cite{Bic04} as a quantum analogue of the classical wreath product of groups. For $1 \leq k \leq N$, we denote the $k$-th canonical embedding of $C( \mathbb G)$ into the free product $C( \mathbb G)^{*N} * C(S_N^+)$ by $\nu_k$.

\begin{definition}[\cite{Bic04}, Definition 3.1] \label{wreathdef}
Let $\mathbb G = (C( \mathbb G), (v_{kl})) $ be a CMQG, let $ N \geq 1$ be an integer and consider the quantum permutation group $S_N^+ = (C(S_N^+), (u_{ij})_{1 \leq i,j \leq N})$. By $C(\mathbb G \wr_* S_N^+)$ we denote the quotient of $C( \mathbb G)^{*N} * C(S_N^+)$ by the closed two-sided ideal generated by the elements
\[ \nu_k(a) u_{ki} - u_{ki} \nu_k(a), \ \ \  1 \leq i,k \leq N, \ \ a \in C( \mathbb G). \]
 The \emph{free wreath product} $\mathbb G \wr_* S_N^+ $ is the CMQG defined by $\mathbb G \wr_* S_N^+ = (C(\mathbb G \wr_* S_N^+), (\nu_i(v_{kl}) u_{ij}) )$.
\end{definition}

\begin{rem}
\begin{enumerate}[(1)]
\item Note that, in Bichons original work \cite{Bic04}, one considers a compact quantum group $\mathbb G$ instead of a compact \emph{matrix} quantum group. However, it is easy to see that the free wreath product respects the structure of a CMQG as well, i.e. that $\mathbb G \wr_* S_N^+ $ is indeed a CMQG whenever $\mathbb G$ is.
\item Let $\Delta_{\mathbb G}, \Delta_{S_N^+}$ denote the comultiplication on $\mathbb G, S_N^+$ respectively. Then, the comultiplication $\Delta$ on $\mathbb G \wr_* S_N^+ $ is given by $ \Delta(u_{ij}) = \sum_{k=1}^N u_{ik} \otimes u_{kj} $
for $1 \leq i,j \leq N$ and
\[ \Delta(\nu_i(a)) = \sum_{k=1}^N \nu_i \otimes \nu_k (\Delta_{\mathbb G}(a)) (u_{ik} \otimes 1), \ \ \ 1 \leq k \leq N, \ \ a \in C( \mathbb G). \] 
\end{enumerate}
\end{rem}

\subsection{Corepresentation theory}
 A ($n$-dimensional) \emph{corepresentation} of a CMQG $ \mathbb G$ is a matrix $u = (u_{ij})_{1 \leq i,j \leq n} \in M_{n}(C(\mathbb G))$ such that
$ \Delta(u_{ij}) = \sum_{k=1}^n u_{ik} \otimes u_{kj} $ for all $1 \leq i,j \leq n. $
We say that $u$ is a \emph{unitary corepresentation}, if in addition $u$ is a unitary element of $M_{n}(C(\mathbb G))$. Note that, whenever $u = (u_{ij})_{1 \leq i,j \leq n}$ is a corepresentation, the \emph{conjugate} $\bar{u} = (u_{ij}^*)_{1 \leq i,j \leq n}$ is a corepresentation as well. However, in general $\bar{u}$ may not be unitary, even if $u$ is.

\begin{definition} \label{intertw}
Let $ \mathbb G$ be a CMQG and let $u = (u_{ij})_{1 \leq i,j \leq n} \in M_{n}(C(\mathbb G))$ and $v = (v_{kl})_{1 \leq k,l \leq m}\in M_{m}(C(\mathbb G))$ denote two corepresentations of $\mathbb G$.
\begin{enumerate}[(1)]
\item The vector space
\[ \Hom(u,v) = \{ T \in B(\mathbb C^n, \mathbb C^m) \ ; \ (T \otimes 1_{C(\mathbb G)}) u = v (T \otimes 1_{C(\mathbb G)})  \} \]
is called the \emph{intertwiner space} from $u$ to $v$.
\item The corepresentation $u$ is called \emph{irreducible} if $\Hom(u,u) = \mathbb C \id$. Note that $u$ is irreducible if and only if $\bar{u}$ is.
\item The corepresentations $u$ and $v$ are called \emph{equivalent} if there exists an invertible intertwiner in $\Hom(u,v)$ and \emph{unitarily equivalent} if there exists a unitary intertwiner in $\Hom(u,v)$.
\item The corepresentation $u \otimes v = (u_{ij} v_{kl})_{ 1 \leq i,j \leq n \atop 1 \leq k,l \leq m} \in M_{nm}(\mathbb C) \otimes C(\mathbb G) $ is called the \emph{tensor product} of $u$ and $v$. If $u$ and $v$ are unitaries, so is $u \otimes v$.
\end{enumerate}
\end{definition}

It is also possible to define the notion of an infinite dimensional corepresentation of a compact quantum group $\mathbb G$ . However, it is a celebrated result by Woronowicz (see \cite{Wor95}) that every irreducible corepresentation of a compact quantum group is finite dimensional and equivalent to a unitary one. Moreover, every unitary corepresentation is unitarily equivalent to a direct sum of irreducibles.\\
We denote the set of equivalence classes of irreducible unitary corepresentations of a CMQG $\mathbb G$ by $Irr(\mathbb G)$ and
we fix a maximal family $\{ u^{\alpha} = (u_{ij}^{\alpha})_{1 \leq i,j \leq d_{\alpha}} \ ; \ \alpha \in Irr(\mathbb G) \}$ of irreducible unitary pairwise non-equivalent corepresentations, with $u^{\bar{\alpha}}$ denoting the representative of the equivalence class of $\overline{u^{\alpha}}$. The span of the coefficients of this maximal family is the unique norm dense $*$-Hopf subalgebra of $C(\mathbb G)$ and is denoted by $\Pol(\mathbb G)$. The comultiplication on $\Pol(\mathbb G)$ is the restriction of the comultiplication on $C(\mathbb G)$ and the coinverse $\kappa: \Pol(\mathbb G) \to \Pol(\mathbb G)$ is the antihomomorphism given by $ \kappa(u_{ij}^{\alpha}) = (u_{ji}^{\alpha})^*, \ ( 1 \leq i,j \leq d_{\alpha}, \ \alpha \in I)$. The counit $\epsilon: \Pol(\mathbb G) \to \mathbb C$ is the $*$-character given by $\epsilon(u_{ij}^{\alpha}) = \delta_{ij}, \ ( 1 \leq i,j \leq d_{\alpha}, \ \alpha \in I).$
In addition, the restriction of the Haar state $h$ to $\Pol(\mathbb G)$ is faithful,
and the set $ \{ u_{ij}^{\alpha} \ ; \ 1 \leq i,j \leq d_{\alpha}, \ \alpha \in I \}$ is an orthogonal basis of the GNS-Hilbert space $L^2(\mathbb G)$. \\
The results in \cite{LemTa} have been obtained under the assumption that the Haar states of the CMQGs involved are tracial. A CMQG for which this holds is said to be of \emph{Kac type} and it is a well known result of Baaj and Skandalis (see \cite{BaSka}) that the Haar state on $\mathbb G$ is a trace if and only if the coinverse $\kappa$ extends continuously to a $*$-antihomomorphism on $C(\mathbb G)$ which again is equivalent to $\kappa^2 = \id$. Whenever we are in the Kac type setting, the conjugate $\bar{u}$ of an irreducible unitary corepresentation $u$ is unitary as well. Hence, we can always assume $ u^{\bar{\alpha}} = \overline{u^{\alpha}}$ in the above notation. \\
Note that, one can also define free wreath products on the level of $(*)$-Hopf algebras (see \cite[Definition 2.2]{Bic04}). In particular, if $\mathbb G$ is a CMQG and $\Pol(\mathbb G)$ is its unique dense $*$-Hopf algebra, we have $\Pol( \mathbb G \wr_* S_N^+) \cong \Pol(\mathbb G) \wr_* \Pol(S_N^+)$ as $*$-Hopf algebras. \\
Since the Haar state $h$ of a CMQG $\mathbb G$ is faithful on the underlying Hopf algebra, we have $\Pol(\mathbb G) \cong \Pol(\mathbb G_r)$ and hence one can derive many interesting results on the reduced version $\mathbb G_r$ from an understanding of the corepresentation theory on the full level. In particular, it is useful to know how tensor products of irreducible unitary corepresentations decompose into sums of irreducibles.\\
Let $M = \langle Irr(\mathbb G) \rangle$ denote the monoid formed by the words over $Irr(\mathbb G)$. We endow $M$ with the following operations:
\begin{enumerate}[(1)]
\item \emph{Involution}: $(\alpha_1, \dots, \alpha_k)^- = (\bar{\alpha}_k, \dots, \bar{\alpha}_1 )$,
\item \emph{concatenation}: for any two words $\alpha, \beta$ we set
\[ (\alpha_1, \dots, \alpha_k),(\beta_1, \dots, \beta_l) = (\alpha_1, \dots, \alpha_k, \beta_1, \dots, \beta_l ). \]
\end{enumerate}
\begin{thm}[\cite{LemTa}] \label{fusionwr}
Let $\mathbb G$ a CMQG of Kac type.
The equivalence classes of irreducible unitary corepresentations of $\mathbb G \wr_* S_N^+$ can be labelled by $\omega(x)$ with $x \in M$, with involution $\overline{\omega(x)} = \omega(\bar{x})$ and the fusion rules
\[ \omega(x) \otimes \omega(y) = \bigoplus_{ x = u,t \ ; \ y = \bar{t}, v} \omega(u,v) \oplus \bigoplus_{ x = u,t \ ; \ y = \bar{t}, v \atop u \neq \emptyset \ , \ v \neq \emptyset } \omega(u.v), \]
where $\omega(u.v)$ is defined as
\[ \omega((\alpha_1, \dots, \alpha_k).(\beta_1, \dots, \beta_l)) = \bigoplus_{ \gamma \subset \alpha_k \otimes \beta_1} \omega(\alpha_1, \dots, \gamma, \dots, \beta_l ) \]
for $u = (\alpha_1, \dots, \alpha_k)$ and $v = (\beta_1, \dots, \beta_l)$. This operation will be called \emph{fusion}.
Moreover, for all $\alpha \in Irr(\mathbb G)$ we have $r(\alpha) = \omega(\alpha) \oplus \delta_{\alpha, 1_{\mathbb G}} 1 $.
\end{thm}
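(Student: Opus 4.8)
\emph{Sketch of a proof.} I would determine the whole rigid $C^*$-tensor category of corepresentations of $\mathbb G \wr_* S_N^+$ by Tannaka--Krein duality together with a combinatorial analysis of intertwiner spaces, in the spirit of Banica's treatment of $S_N^+$ and of free products \cite{BanSp}; throughout I work with $N \geq 4$, so that the intertwiner spaces $\Hom(u^{\otimes k}, u^{\otimes l})$ of $S_N^+$ are spanned by the set $NC(k,l)$ of noncrossing partitions. For a corepresentation $s = (s_{kl})$ of $\mathbb G$, let $r(s)$ denote the $N\dim(s)$-dimensional corepresentation of $\mathbb G \wr_* S_N^+$ with coefficient matrix $(\nu_i(s_{kl}) u_{ij})$; this assignment is additive, satisfies $\overline{r(s)} = r(\bar s)$ (here the Kac type hypothesis enters, ensuring $\overline{u^\alpha}$ is unitary), and the fundamental corepresentation of $\mathbb G \wr_* S_N^+$ is $r(v)$ for $v$ the fundamental corepresentation of $\mathbb G$. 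The first task is to establish a \emph{decorated noncrossing partition model} for the morphisms: for corepresentations $s_1, \dots, s_k$ and $t_1, \dots, t_l$ of $\mathbb G$, the space $\Hom\big(r(s_1)\otimes\cdots\otimes r(s_k),\, r(t_1)\otimes\cdots\otimes r(t_l)\big)$ is linearly spanned by operators $T_{(p,(f_B)_B)}$ indexed by a partition $p \in NC(k,l)$ together with, for every block $B$ of $p$, a $\mathbb G$-intertwiner $f_B \in \Hom_{\mathbb G}\big(\bigotimes_i s_i,\, \bigotimes_j t_j\big)$, the tensor products ranging over the upper, resp.\ lower, legs of $B$ in their natural left-to-right order. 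The partition $p$ encodes the action on the $\mathbb C^N$-``permutation'' legs (the Temperley--Lieb calculus of $S_N^+$), the maps $f_B$ the contractions of the ``quantum group'' legs, and the defining commutations $\nu_k(a) u_{ki} = u_{ki}\nu_k(a)$ of Definition \ref{wreathdef} are exactly what make $(p,(f_B)_B) \mapsto T_{(p,(f_B)_B)}$ a well-defined monoidal functor; completeness of this family can be extracted from Bichon's Tannakian description of the free wreath product \cite{Bic04} (or proved by a direct freeness computation against the Haar state).

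Next I would introduce the candidate irreducibles and prove their irreducibility and pairwise inequivalence. Put $\omega(\emptyset) = 1$. For $\alpha \in Irr(\mathbb G)$, the model gives $\dim\Hom(r(u^\alpha), r(u^\alpha)) = 1 + \delta_{\alpha,1_{\mathbb G}}$: the contributing diagrams are the identity (its single block decorated by $\Hom_{\mathbb G}(u^\alpha,u^\alpha) = \mathbb C\id$) and, only when $\alpha = 1_{\mathbb G}$, the diagram factoring through the trivial corepresentation; hence $r(\alpha) = \omega(\alpha) \oplus \delta_{\alpha, 1_{\mathbb G}} 1$, which defines $\omega(\alpha)$. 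For a word $x = (\alpha_1, \dots, \alpha_k) \in M$ one then defines $\omega(x)$ inductively as the new ``leading'' irreducible summand appearing in $\omega(\alpha_1)\otimes\cdots\otimes\omega(\alpha_k)$. The core estimate is the computation, via the partition model, of $\dim\Hom(\omega(x), \omega(y))$ for arbitrary $x, y \in M$: after cutting with minimal $\mathbb G$-projections onto the $u^{\alpha_i}$ and $u^{\beta_j}$, a surviving decorated noncrossing partition between the leg-sequences of $x$ and $y$ can --- since the $u^{\alpha_i}$ are irreducible and the partition is noncrossing --- only pair a letter of $x$ with a letter of $y$ through a cap realizing an element of $\Hom_{\mathbb G}(u^\alpha \otimes u^{\bar\alpha}, 1)$, and a bookkeeping of the admissible systems of such caps shows $\dim\Hom(\omega(x), \omega(y)) = \delta_{x,y}$. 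This simultaneously yields irreducibility of each $\omega(x)$ and inequivalence of $\omega(x), \omega(y)$ for $x \neq y$, and the relation $\overline{\omega(x)} = \omega(\bar x)$ follows from $\overline{r(s)} = r(\bar s)$.

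The fusion rules then come from computing, again by the partition model and Frobenius reciprocity, the multiplicities $\dim\Hom\big(\omega(z),\, \omega(x)\otimes\omega(y)\big)$. Analyzing a compatibly decorated noncrossing partition between the leg-sequence of $z$ and the concatenated leg-sequence of $(x,y)$: noncrossingness forces the interface between the $x$-part and the $y$-part to be resolved by first cancelling a suffix $t$ of $x$ against the corresponding reverse-conjugate prefix $\bar t$ of $y$ through a nested system of caps --- this produces the sum over decompositions $x = u,t$ and $y = \bar t, v$ --- after which the two now-exposed boundary letters, namely the last letter $\alpha_k$ of $u$ and the first letter $\beta_1$ of $v$, either lie in distinct blocks, contributing the concatenation term $\omega(u,v)$, or are merged into one block decorated by a minimal intertwiner onto some $\gamma \subset \alpha_k \otimes \beta_1$, contributing the fusion term $\omega(u.v) = \bigoplus_{\gamma\subset\alpha_k\otimes\beta_1}\omega(\alpha_1,\dots,\gamma,\dots,\beta_l)$; when $u = \emptyset$ or $v = \emptyset$ no boundary letter is exposed, which is exactly the restriction $u,v \neq \emptyset$ on the fusion sum. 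Finally, since $r(v) = \bigoplus_{\alpha \subset v} r(u^\alpha) = \bigoplus_{\alpha\subset v}\big(\omega(\alpha) \oplus \delta_{\alpha,1_{\mathbb G}} 1\big)$ and $r(v)$ generates the category monoidally, induction on word length using the fusion rules just proved shows every irreducible corepresentation occurs in a tensor power of $r(v)$ and is therefore some $\omega(x)$; this completes the classification.

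\emph{Main obstacle.} The genuinely hard part is the combinatorial core of the last two paragraphs: making the decorated noncrossing partition model precise --- in particular verifying completeness of the morphisms and compatibility with the rigidity (Frobenius) structure --- and then executing the intricate count of admissible decorated partitions that reproduces the two-part fusion formula, with the interface fusion $\gamma \subset \alpha_k \otimes \beta_1$ correctly accounted for. This is the ``free'' analogue of the fusion rules of a classical wreath product and constitutes the technical heart of \cite{LemTa}.
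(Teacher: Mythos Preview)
The paper does not prove this theorem at all: Theorem~\ref{fusionwr} is stated with the citation \cite{LemTa} in its header and is simply quoted as an input from the Lemeux--Tarrago preprint, to be used as a black box throughout the rest of the article. There is therefore no ``paper's own proof'' to compare your proposal against.

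That said, your sketch is a reasonable outline of the strategy actually carried out in \cite{LemTa}: a Tannaka--Krein reconstruction via a decorated noncrossing partition model for the intertwiner spaces of $\mathbb G \wr_* S_N^+$, followed by the inductive definition of the $\omega(x)$, the irreducibility/inequivalence count, and the interface analysis yielding the two-part fusion formula. You also correctly identify the genuine difficulty (completeness of the partition model and the combinatorial bookkeeping at the $x$--$y$ interface). One small caveat: the completeness of the decorated partition basis does not quite fall out of Bichon's original paper \cite{Bic04} in the generality needed here; this is precisely the new technical work done in \cite{LemTa}, so your parenthetical ``or proved by a direct freeness computation against the Haar state'' is closer to what is actually required. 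But since the present paper treats the result as a citation, the appropriate response for the purposes of this paper is simply to invoke \cite{LemTa} rather than to reprove it.
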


\section{Simplicity and uniqueness of the trace}

In \cite{Lem2}, F. Lemeux proved that the reduced version of the free wreath product $H_N^+(\Gamma) = \hat{\Gamma} \wr_* S_N^+$ is simple and has a unique tracial state, namely the Haar state, for a discrete group $\Gamma$ with $| \Gamma | \geq 4$ and $N \geq 8$. The goal of this section is to generalise this result by showing that $\mathbb G \wr_* S_N^+$ is simple with unique trace whenever $\mathbb G$ is a CMQG of Kac type and $N \geq 8$. We will closely follow Lemeux's proof.
First, we observe that the free wreath product $\mathbb G \wr_* S_N^+$ inherits the traciality of its Haar state from its left component $\mathbb G$:

\begin{prop} \label{Kacwr}
Let $\mathbb G$ be a compact matrix quantum group of Kac type. Then $\mathbb G \wr_* S_N^+$ is of Kac type for all $N \geq 1$.
\end{prop}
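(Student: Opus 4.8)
The plan is to exhibit a continuous $*$-antihomomorphism on $C(\mathbb G \wr_* S_N^+)$ squaring to the identity, since by the Baaj--Skandalis criterion recalled in the preliminaries this is equivalent to the Haar state being tracial. Equivalently, I would work with the coinverse $\kappa$ on the Hopf algebra level: recall that a CMQG is of Kac type iff its coinverse $\kappa$ satisfies $\kappa^2 = \id$ and extends to a $*$-antihomomorphism of the full $C^*$-algebra. Since $\Pol(\mathbb G \wr_* S_N^+) \cong \Pol(\mathbb G) \wr_* \Pol(S_N^+)$ as $*$-Hopf algebras, it suffices to understand $\kappa$ on the generators $u_{ij}$ and $\nu_k(v_{rs})$ and check it behaves well.

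First I would fix notation: let $\kappa_{\mathbb G}$ and $\kappa_{S_N^+}$ be the coinverses of $\mathbb G$ and $S_N^+$. Since $\mathbb G$ is of Kac type, $\kappa_{\mathbb G}$ is a $*$-antihomomorphism with $\kappa_{\mathbb G}^2 = \id$, and $\kappa_{\mathbb G}(v_{rs}) = v_{sr}^*$; because the $u_{ij}$ are projections, $S_N^+$ is of Kac type and $\kappa_{S_N^+}(u_{ij}) = u_{ji}^* = u_{ji}$. Using the formula for the comultiplication on the free wreath product, one computes that the coinverse $\kappa$ of $\mathbb G \wr_* S_N^+$ is forced on generators by $\kappa(u_{ij}) = u_{ji}$ and $\kappa(\nu_i(v_{rs}) u_{ij})$, and unravelling the Hopf algebra structure (or just applying the defining antipode property to the fundamental corepresentation $(\nu_i(v_{rs})u_{ij})$) gives $\kappa(\nu_i(v_{rs})u_{ij}) = u_{ji}\,\nu_j(v_{sr}^*)$, i.e. $\kappa$ sends $\nu_i(v_{rs})$ to $\nu_i(v_{sr}^*) = \nu_i(\kappa_{\mathbb G}(v_{rs}))$ after absorbing the permutation part. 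The key observation is then that the formula $a \mapsto \kappa_{\mathbb G}(a)$ on each copy $\nu_i(C(\mathbb G))$ together with $u_{ij} \mapsto u_{ji}$ is compatible with the relations defining the free wreath product: the commutation relation $\nu_k(a)u_{ki} = u_{ki}\nu_k(a)$ is sent, under the \emph{anti}homomorphism, to $u_{ik}\kappa_{\mathbb G}(a)\,(\text{in copy }k)$ equals itself, which holds precisely because the same commutation relation holds in $\mathbb G \wr_* S_N^+$ (with $i$ and $k$ in the appropriate positions). Hence $\kappa$ extends to a $*$-antihomomorphism on the whole $C^*$-algebra $C(\mathbb G \wr_* S_N^+)$ by universality, and $\kappa^2 = \id$ since this holds on the generating set (it holds on $u_{ij}$ trivially and on $\nu_i(v_{rs})$ because $\kappa_{\mathbb G}^2 = \id$).

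The main obstacle I anticipate is bookkeeping the permutation indices correctly when applying $\kappa$ to a product such as $\nu_i(v_{rs})u_{ij}$: the antipode of a fundamental corepresentation reverses the order of a product and swaps row/column indices, so one must verify carefully that the resulting element $u_{ji}\nu_j(v_{sr}^*)$ indeed lies in $C(\mathbb G \wr_* S_N^+)$ and that the map is well defined on the free product $C(\mathbb G)^{*N} * C(S_N^+)$ before passing to the quotient. Once the well-definedness on the free product is in hand (this is automatic: it is the free product of the antihomomorphisms $\kappa_{\mathbb G}$ on each factor and $\kappa_{S_N^+}$, composed with the flip of factor orderings inherent in an antihomomorphism of a free product), the only real content is checking the ideal is preserved, which is the short computation sketched above. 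I would therefore structure the written proof as: (i) recall the Kac criterion; (ii) define $\kappa$ on $C(\mathbb G)^{*N} * C(S_N^+)$; (iii) check the generating relations of Definition~\ref{wreathdef} are mapped into the ideal; (iv) conclude $\kappa$ descends and $\kappa^2 = \id$, so $\mathbb G \wr_* S_N^+$ is of Kac type.
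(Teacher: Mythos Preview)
Your overall strategy---verify that the antipode squares to the identity---is the same as the paper's, but the execution contains a genuine error in the index bookkeeping, precisely at the point you flagged as the main obstacle.

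You propose to build an antihomomorphism on the free product by $\nu_k(a)\mapsto\nu_k(\kappa_{\mathbb G}(a))$ and $u_{ij}\mapsto u_{ji}$, and then check it kills the ideal generated by $\nu_k(a)u_{ki}-u_{ki}\nu_k(a)$. Applying your map to this commutator yields $u_{ik}\,\nu_k(\kappa_{\mathbb G}(a))-\nu_k(\kappa_{\mathbb G}(a))\,u_{ik}$. You claim this lies in the ideal ``because the same commutation relation holds with $i$ and $k$ in the appropriate positions''. It does not: the defining relations in Definition~\ref{wreathdef} only force $\nu_k(\cdot)$ to commute with $u_{k\bullet}$ (first index equal to the copy index), not with $u_{\bullet k}$. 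So your antihomomorphism does \emph{not} descend to $C(\mathbb G\wr_*S_N^+)$, and in fact cannot coincide with the antipode. Indeed, from $\kappa(\nu_i(v_{rs})u_{ij})=u_{ji}\nu_j(v_{sr}^*)=\nu_j(v_{sr}^*)u_{ji}$ (which you correctly compute), summing over $j$ gives
\[
\kappa(\nu_i(v_{rs}))=\sum_{j}\nu_j(v_{sr}^*)\,u_{ji},
\]
which is not $\nu_i(v_{sr}^*)$. The antipode mixes the copies; it does not restrict to $\nu_i\circ\kappa_{\mathbb G}$ on the $i$-th factor, so the phrase ``after absorbing the permutation part'' hides a false identity.

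The paper avoids this trap by never trying to define $\kappa$ from the free product side. Instead it uses that $(\nu_i(a_{kl})u_{ij})$ and $(\nu_i(a_{kl}^*)u_{ij})$ are \emph{both} irreducible unitary corepresentations (here the Kac hypothesis on $\mathbb G$ enters, making $\bar\alpha$ unitary), so the universal formula $\kappa(w_{pq})=w_{qp}^*$ applies to each. One then computes $\kappa^2(\nu_i(a_{kl}))$ directly via the displayed sum above and observes it returns $\nu_i(a_{kl})$ after re-summing over $j$. You should restructure your argument along these lines: compute $\kappa$ on the matrix coefficients of the basic corepresentations $r(\alpha)$ using the general antipode formula, then iterate once and simplify.
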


\begin{proof}
We recall that the coinverse $\kappa: \Pol(\mathbb G \wr_* S_N^+) \to \Pol(\mathbb G \wr_* S_N^+)$ is given by $\kappa(v_{ij}) = v_{ji}^*$ where $v= (v_{ij})$ is an irreducible unitary corepresentation of $\mathbb G \wr_* S_N^+$. It suffices to show that $\kappa^2 = \id$.
Let $(u_{ij})$ be the fundamental corepresentation of $S_N^+$ and let $\alpha = (a_{ij}) \neq 1_{\mathbb G}$ be an irreducible unitary corepresentation of $\mathbb G$. As $\mathbb G$ is of Kac type, the irreducible corepresentation $\bar{\alpha}$ is unitary as well and hence 
$(\nu_i (a_{kl}) u_{ij}), \ (\nu_i (a_{kl}^*) u_{ij})$ are irreducible unitary corepresentations of $\mathbb G \wr_* S_N^+$. Therefore, for every $ 1\leq i,j \leq N$ and every $ 1\leq k,l \leq \dim\alpha$, we have
\[ \kappa(\nu_i (a_{kl}) u_{ij}) = (\nu_j (a_{lk}) u_{ji})^* = (u_{ji}\nu_j (a_{lk}))^* = \nu_j (a_{lk}^*) u_{ji} \]
and
\[ \kappa(\nu_i (a_{kl})^* u_{ij}) = \nu_j (a_{lk}) u_{ji},  \]
as $u_{ji}$ and $\nu_j (a_{lk})$ commute. Now it follows that
\[ \kappa(\nu_i (a_{kl})) = \kappa\bigg(\sum_{j=1}^N \nu_i (a_{kl}) u_{ij}\bigg) = \sum_{j=1}^N \nu_j (a_{lk}^*) u_{ji} \]
and therefore
\[ \kappa^2(\nu_i (a_{kl})) = \sum_{j=1}^N \kappa (\nu_j (a_{lk}^*) u_{ji}) = \sum_{j=1}^N \nu_i (a_{kl}) u_{ij} = \nu_i (a_{kl}). \]
For the trivial corepresentation $1_{\mathbb G}$ of $\mathbb G$, $\kappa^2(\nu_i(1_{\mathbb G})) = 1_{\mathbb G \wr_* S_N^+} = \nu_i(1_{\mathbb G})$ holds trivially for every $1 \leq i \leq N$. As  $\Pol(\mathbb G \wr_* S_N^+) \cong \Pol(\mathbb G) \wr_* \Pol(S_N^+)$ as $*$-Hopf algebras and as $\Pol(\mathbb G)$ is the linear span of the coefficients of the irreducible unitary corepresentations of $\mathbb G$, we have $\kappa^2 = \id$ on $\nu_i(\Pol(\mathbb G)) \subset \Pol(\mathbb G \wr_* S_N^+) $ for all $1 \leq i \leq N$. Moreover, since the embedding of $S_N^+$ into $\mathbb G \wr_* S_N^+$ is an injective morphism of quantum groups, we also have $\kappa^2 = \id$ on $\Pol(S_N^+)$. Hence $\kappa^2 = \id$ on $\Pol(\mathbb G \wr_* S_N^+)$.
\end{proof}

We consider the monoid $M = \langle Irr(\mathbb G) \rangle$ and we denote the empty word in $M$ by $\emptyset$. If $x = (\alpha_1,\dots,\alpha_k) \in M$, we will denote by $|x| = k$ the length of $x$ and if $A,B \subset M$, we set
\[ A \circ B = \{ z \in  M \ : \ \exists (x,y) \in A \times B \text{ such that } \omega(z) \subset \omega(x) \otimes \omega(y) \}. \]
Here, $\omega(z) \subset \omega(x) \otimes \omega(y)$ means that $\omega(z)$ appears as a direct summand in the decomposition of $\omega(x) \otimes \omega(y)$ into irreducibles. If $A \subset M$, we set $\bar{A}$ the set of conjugates $\bar{x}$ of elements $x \in A$. Also, we denote by $(\alpha,\dots)$ an element starting by $\alpha \in Irr(\mathbb G)$ and by $(\dots,\alpha)$ an element ending by $\alpha$. We need to partition $M$ into nice subsets.

\begin{notation} \label{lotnot}
We consider the trivial corepresentation $1_{\mathbb G} \in M$ and put:
\begin{itemize}
\item $1_{\mathbb G}^k$ the word $(1_{\mathbb G},\dots,1_{\mathbb G}) \in M$ of length $k$ with the convention $1_{\mathbb G}^0 = \emptyset$,

\item $E_1 = \bigcup \{ (1_{\mathbb G}, \dots) \} \cup \{ \emptyset \} $ the subset of words starting with $1_{\mathbb G}$,
\item $E_2= \bigcup_{k \in \mathbb N} \{1_{\mathbb G}^k \}$ the subset of words with only $1_{\mathbb G}$ as a letter,
\item $E_3 = E_1 \backslash E_2$,
\item $S = M \backslash E_2$,
\item $G_1  = \bigcup_{\alpha \neq 1_{\mathbb G} \atop \alpha \in Irr(\mathbb G)} \{ (\alpha, \dots) \}$ the subset of words starting with any $\alpha \neq 1_{\mathbb G}$,
\item $G_2 = \bigcup_{\alpha, \alpha' \neq 1_{\mathbb G}} \{ (\alpha, \dots, \alpha') \} $ the subset of words starting with any $\alpha \neq 1_{\mathbb G}$ and ending with any $\alpha' \neq 1_{\mathbb G}$.
\end{itemize}
\end{notation}

We will later on have a closer look at the combinatorics of the sets defined above.
To transfer the combinatorial structure of those sets to the reduced $C^*$-algebra of $\mathbb G \wr_* S_N^+$, we will also need to define corresponding $*$-subalgebras of $ \Pol(\mathbb G \wr_* S_N^+)$.

\begin{notation} \label{morenot}
\begin{enumerate}[(1)]
\item By $\mathcal E \subset \Pol(\mathbb G \wr_* S_N^+)$ we denote the sub-$*$-algebra generated by the coefficients of $\omega(x), \ x \in E_2$ 

\item and by $\mathcal S \subset \Pol(\mathbb G \wr_* S_N^+)$ we denote the sub-$*$-algebra generated by the coefficients of $\omega(x), \ x \in S$.
\end{enumerate}
\end{notation}

Now, by \cite[Lemma 2.1, Proposition 2.2]{Ver04}, there exists a unique conditional expectation
$ P: C_r^*(\mathbb G \wr_* S_N^+) \twoheadrightarrow \bar{\mathcal E}^{ \| \cdot \|_r }$ such that the Haar state $h_{\bar{\mathcal E}^{ \| \cdot \|_r }}$ on $\bar{\mathcal E}^{ \| \cdot \|_r }$ and the Haar state $h$ on $C_r^*(\mathbb G \wr_* S_N^+) $ satisfy $h = h_{\bar{\mathcal E}^{ \| \cdot \|_r }} \circ P$ and $\ker(P) = \bar{\mathcal S}^{ \| \cdot \|_r }$.\\
We note that $P$ is realised by the compression of the projection $p$ onto the closure of $\mathcal E$ in $L^2(\mathbb G \wr_* S_N^+)$, i.e. $ Px = pxp \in p \bar{\mathcal E}^{ \| \cdot \|_r } p \cong \bar{\mathcal E}^{ \| \cdot \|_r }$
for all $x \in C_r^*(\mathbb G \wr_* S_N^+)$. Moreover, we have the decomposition
\[ C_r^*(\mathbb G \wr_* S_N^+) = \overline{(\mathcal E \oplus \mathcal S)}^{\| \cdot \|_r} = \bar{\mathcal E}^{ \| \cdot \|_r } \oplus \bar{\mathcal S}^{ \| \cdot \|_r } \]
as $S \sqcup E_2 = M$. Here, the symbol $\oplus$ denotes the direct sum of vector spaces (not the direct sum of $C^*$-algebras).
Our next step is to identify $C_r^*(S_N^+)$ as a sub-$C^*$-algebra of $C_r^*(\mathbb G \wr_* S_N^+)$ in terms of words in $M$.

\begin{prop} \label{iso}
$C_r^*(S_N^+)$ and $\bar{\mathcal E}^{ \| \cdot \|_r } \subset C_r^*(\mathbb G \wr_* S_N^+)$ are isomorphic as compact matrix quantum groups. 
\end{prop}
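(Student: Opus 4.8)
The plan is to identify $\bar{\mathcal E}^{\|\cdot\|_r}$ as the reduced $C^*$-algebra of $S_N^+$ by producing an explicit isomorphism carrying the fundamental corepresentation to the fundamental corepresentation. The key observation is Theorem~\ref{fusionwr}: the words $x \in E_2$, i.e. $x = 1_{\mathbb G}^k$, together with the fusion rules restricted to such words, reproduce exactly the fusion rules of $S_N^+$. Indeed, by the last sentence of Theorem~\ref{fusionwr} we have $r(1_{\mathbb G}) = \omega(1_{\mathbb G}) \oplus 1$, and for $x = 1_{\mathbb G}^k$, $y = 1_{\mathbb G}^l$ the fusion product $\omega(x) \otimes \omega(y)$ decomposes (all the $\gamma \subset 1_{\mathbb G} \otimes 1_{\mathbb G}$ are trivial, so fusion of two all-$1_{\mathbb G}$ words is again an all-$1_{\mathbb G}$ word) into $\bigoplus_{j} \omega(1_{\mathbb G}^{k+l-2j})$ with the appropriate multiplicities — which matches the well-known fusion rules $u^{(k)} \otimes u^{(l)} = \bigoplus u^{(k+l-2j)}$ of $S_N^+$ (the Temperley--Lieb / $SO(3)$ fusion rules). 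So the corepresentation category generated by $\omega(1_{\mathbb G})$ inside $\Pol(\mathbb G \wr_* S_N^+)$ is, as a tensor category, the corepresentation category of $S_N^+$.

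First I would make the map concrete at the Hopf-algebra level. The canonical embedding $C(S_N^+) \hookrightarrow C(\mathbb G \wr_* S_N^+)$, $u_{ij} \mapsto \nu_i(1_{\mathbb G}) u_{ij} = u_{ij}$ (using $\nu_i(1_{\mathbb G}) = 1$), is a morphism of CMQGs, and on polynomial algebras it is injective — this is already invoked in the proof of Proposition~\ref{Kacwr} ("the embedding of $S_N^+$ into $\mathbb G \wr_* S_N^+$ is an injective morphism of quantum groups"). Its image is precisely $\mathcal E$: on one hand $u = \omega(1_{\mathbb G})$ up to the trivial summand, so all coefficients of all $\omega(1_{\mathbb G}^k)$ lie in the $*$-algebra generated by the $u_{ij}$, hence in the image; on the other hand the $u_{ij}$ are coefficients of $\omega(1_{\mathbb G})$, so the image is contained in $\mathcal E$. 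Thus $\Pol(S_N^+) \cong \mathcal E$ as $*$-Hopf algebras, compatibly with the fundamental corepresentations.

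Next I would pass to the reduced level. The Haar state of $\mathbb G \wr_* S_N^+$ restricted to $\mathcal E$ is a left- and right-invariant state on the sub-Hopf-algebra $\mathcal E \cong \Pol(S_N^+)$, hence by uniqueness of the Haar state it coincides with the Haar state of $S_N^+$ under the isomorphism. Therefore the GNS-completion of $\mathcal E$ with respect to $h|_{\mathcal E}$ — which, since $\mathcal E = p \cdot \Pol(\mathbb G \wr_* S_N^+)$ sits in the corner and $P$ implements $h = h_{\bar{\mathcal E}} \circ P$, is exactly $\bar{\mathcal E}^{\|\cdot\|_r} \subset C_r^*(\mathbb G \wr_* S_N^+)$ acting on $\overline{\mathcal E}^{L^2} \cong L^2(S_N^+)$ — is isomorphic to $C_r^*(S_N^+)$, and this isomorphism intertwines $(\nu_i(1_{\mathbb G})u_{ij})$ with the fundamental corepresentation of $S_N^+$ and the two comultiplications. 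Hence $\bar{\mathcal E}^{\|\cdot\|_r} \cong C_r^*(S_N^+)$ as CMQGs.

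The main obstacle is the bookkeeping around the GNS-representation: one must check that completing $\mathcal E$ inside $C_r^*(\mathbb G \wr_* S_N^+)$ (i.e. with the norm inherited from $B(L^2(\mathbb G \wr_* S_N^+))$) genuinely gives the \emph{reduced} $C^*$-algebra of $S_N^+$ and not some intermediate completion. This is where the conditional expectation $P$ and the fact (noted in the excerpt, from \cite{Ver04}) that $P$ is the compression by the $L^2$-projection $p$ onto $\overline{\mathcal E}$ do the work: for $a \in \mathcal E$ one has $\pi_h(a) = p\,\pi_h(a)\,p$, so the operator norm of $a$ on $L^2(\mathbb G \wr_* S_N^+)$ equals its operator norm on $p L^2 = \overline{\mathcal E}^{L^2}$, and the latter space is canonically $L^2(S_N^+)$ with $\mathcal E$ acting via its own GNS-representation for $h|_{\mathcal E} = h_{S_N^+}$. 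Once this identification of norms is in place the isomorphism of reduced algebras, and its compatibility with the CMQG structure, follow.
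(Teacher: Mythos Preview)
Your proposal is correct and follows essentially the same route as the paper: identify $\mathcal E$ with $\Pol(S_N^+)$ via the canonical embedding $C(S_N^+)\hookrightarrow C(\mathbb G\wr_* S_N^+)$ together with the relation $(u_{ij})=r(1_{\mathbb G})=\omega(1_{\mathbb G})\oplus 1$ from Theorem~\ref{fusionwr}, and then pass to the reduced completion. The paper makes the inclusion $\mathcal E\subset\Pol(S_N^+)$ explicit by induction on the fusion rule $\omega(1_{\mathbb G}^{k-1})\otimes\omega(1_{\mathbb G})=\omega(1_{\mathbb G}^{k})\oplus\omega(1_{\mathbb G}^{k-1})\oplus\omega(1_{\mathbb G}^{k-2})$, whereas you gesture at the general fusion pattern; conversely you give more detail than the paper on why the reduced norms match.

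One small correction in that last step: the equality $\pi_h(a)=p\,\pi_h(a)\,p$ is not literally true for $a\in\mathcal E$ (such $\pi_h(a)$ certainly acts nontrivially on $L^2(S)$). What you need, and what the existence of the $h$-preserving conditional expectation gives, is that $p$ \emph{commutes} with $\pi_h(\mathcal E)$ (since $\mathcal E$ is a $*$-subalgebra preserving $\overline{\Lambda_h(\mathcal E)}$), so that $a\mapsto \pi_h(a)|_{pL^2}$ is an injective $*$-homomorphism of $C^*$-algebras and hence isometric; this yields $\|a\|_r=\|a\|_{C_r^*(S_N^+)}$ as desired.
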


\begin{proof}
We will first show that $\Pol(S_N^+) \subset C(S_N^+)$ is isomorphic as a Hopf algebra to $\mathcal E \subset C(\mathbb G \wr_* S_N^+)$. To do so, we notice that by definition of $\mathbb G \wr_* S_N^+$ the natural embedding of $C(S_N^+)$ into $C(\mathbb G \wr_* S_N^+)$ is an isomorphism of CMQGs onto its range and hence we can consider $C(S_N^+)$ a $C^*$-subalgebra of $C(\mathbb G \wr_* S_N^+)$. By Theorem \ref{fusionwr}, the fundamental corepresentation $(u_{ij})$ of $S_N^+$ is given by 
\[(u_{ij}) = r(1_{\mathbb G}) = \omega(1_{\mathbb G}) \oplus 1 =  \omega(1_{\mathbb G}) \oplus \omega(\emptyset), \]
and it follows that $ \Pol(S_N^+) $ is the $*$-subalgebra of $\Pol(\mathbb G \wr_* S_N^+)$ generated by the coefficients of $\omega(1_{\mathbb G})$ and $\omega(\emptyset)$. As $1_{\mathbb G}, \emptyset \in E_2$, we get $\Pol(S_N^+) \subset \mathcal E$.\\ 
To prove the inclusion $ \mathcal E \subset \Pol(S_N^+)$, we need to show that the coefficents of $\omega(1_{\mathbb G}^k)$ lie in $\Pol(S_N^+)$ for every $k \in \mathbb N$. We do so by induction: \\
For $k=0,1$, we have already noticed that this is true. For $k \geq 2$ we assume that the assertion holds true for all $n < k$. The fusion rules in Theorem \ref{fusionwr} imply
\[ \omega (1_{\mathbb G}^{k-1}) \otimes \omega(1_{\mathbb G}) =  \omega (1_{\mathbb G}^k) \oplus \omega (1_{\mathbb G}^{k-1}) \oplus \omega (1_{\mathbb G}^{k-2}), \]
and hence the coefficients of $\omega (1_{\mathbb G}^k)$ can be written as linear combinations
of coefficients of $\omega (1_{\mathbb G}^{k-1}) \otimes \omega(1_{\mathbb G}), \omega (1_{\mathbb G}^{k-1})  $ and $\omega (1_{\mathbb G}^{k-2})$. This proves $ \mathcal E \subset \Pol(S_N^+)$ and hence $ \mathcal E = \Pol(S_N^+)$ and $ C(S_N^+) \cong \tilde{\mathcal E}$. In particular, their reduced versions are isomorphic, i.e.
 \[ C^*_r(S_N^+) \ \cong \ C^*_r(\tilde{\mathcal E}) \ \cong \ \overline{\mathcal E}^{ \| \cdot \|_r} \subset C_r^*(\mathbb G \wr_* S_N^+). \]
\end{proof}

Since $C^*_r(S_N^+)$ is simple for $N \geq 8$ by \cite{Bra2}, we obtain:
\begin{cor} \label{simplecor}
$\bar{\mathcal E}^{ \| \cdot \|_r } \subset C_r^*(\mathbb G \wr_* S_N^+)$ is simple with unique trace for $N \geq 8$.
\end{cor}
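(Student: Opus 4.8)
The statement is obtained by transporting known facts about $C^*_r(S_N^+)$ along the isomorphism of Proposition \ref{iso}. Concretely, the plan is the following. First I would recall that Proposition \ref{iso} provides a $*$-isomorphism $\Phi \colon C^*_r(S_N^+) \to \bar{\mathcal E}^{\|\cdot\|_r}$ which is moreover an isomorphism of compact matrix quantum groups; in particular $\Phi$ intertwines the two Haar states, so that the Haar state $h_{\bar{\mathcal E}^{\|\cdot\|_r}}$ pulls back along $\Phi$ to the Haar state of $C^*_r(S_N^+)$. Recall also that this Haar state is tracial because $S_N^+$ is of Kac type, and that, by the conditional expectation $P$ from \cite[Lemma 2.1, Proposition 2.2]{Ver04}, the state $h_{\bar{\mathcal E}^{\|\cdot\|_r}}$ agrees with the restriction to $\bar{\mathcal E}^{\|\cdot\|_r}$ of the Haar state of $C^*_r(\mathbb G \wr_* S_N^+)$.

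Next I would invoke Brannan's theorem from \cite{Bra2}: for $N \geq 8$ the reduced $C^*$-algebra $C^*_r(S_N^+)$ is simple and admits a \emph{unique} tracial state, namely its Haar state. Since simplicity is manifestly preserved by a $*$-isomorphism, $\bar{\mathcal E}^{\|\cdot\|_r}$ is simple. For the uniqueness of the trace, one uses that any $*$-isomorphism induces an affine bijection between the spaces of tracial states of the two $C^*$-algebras (a state $\tau$ on $\bar{\mathcal E}^{\|\cdot\|_r}$ is tracial if and only if $\tau \circ \Phi$ is tracial on $C^*_r(S_N^+)$); hence $\bar{\mathcal E}^{\|\cdot\|_r}$ has a unique tracial state, and by the intertwining property of $\Phi$ this trace is exactly $h_{\bar{\mathcal E}^{\|\cdot\|_r}}$, i.e.\ the restriction of the Haar state of $C^*_r(\mathbb G \wr_* S_N^+)$.

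There is essentially no obstacle in this argument; the only points worth stating explicitly are that one must quote the full strength of \cite{Bra2}, namely uniqueness of the trace and not merely simplicity, and that the isomorphism of Proposition \ref{iso} genuinely lives at the level of the reduced $C^*$-algebras (which it does, being an isomorphism of the associated CMQGs and hence compatible with the GNS constructions of the Haar states). With these remarks the corollary follows immediately.
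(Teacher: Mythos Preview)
Your argument is correct and is precisely the paper's approach: the corollary is stated immediately after Proposition~\ref{iso} with the one-line justification that $C^*_r(S_N^+)$ is simple (with unique trace) for $N\geq 8$ by \cite{Bra2}, and the isomorphism transports these properties to $\bar{\mathcal E}^{\|\cdot\|_r}$. Your additional remarks about the intertwining of Haar states and the bijection of trace spaces merely spell out what the paper leaves implicit.
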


The next result shows that the subsets of the monoid $M$ defined in Notation \ref{lotnot} have certain stability properties:

\begin{lem} \label{propstab}
Let $ \mathbb G$ be a CMQG with $| Irr(\mathbb G) | \geq 2$, i.e. $\mathbb G \neq \mathbb C$, and let $\alpha \in Irr(\mathbb G)$ such that $ \alpha \neq 1_{\mathbb G} $. Moreover, let $G \subset S$ be a finite set and put $x_1 = (\alpha, 1_{\mathbb G}), x_2 = (\alpha, 1_{\mathbb G}^3), x_3 = (\alpha, 1_{\mathbb G}^5) \in M $. Then:
\begin{enumerate}[(1)]
\item $ S = E_3 \sqcup G_1$,
\item $(G_2 \circ E_1) \cap E_1 = \emptyset$,
\item $ (\{ x_t \} \circ G_1 )\cap (\{ x_s \} \circ G_1) = \emptyset$ if $t \neq s$,
\item $ \bigcup_{t=1}^3 \{ x_t \} \circ G_2 \circ \{ \bar{x}_t \} \subset G_2 $,
\item there is $x \in S$ such that $\{ x \} \circ G \circ \{ \bar{x} \} \subset G_2$.
\end{enumerate}
\end{lem}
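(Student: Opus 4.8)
The plan is to settle (1) and (2) directly from the definitions and to reduce (3)--(5) to a single bookkeeping computation of $\{x\}\circ\{y\}$ and $\{x\}\circ\{y\}\circ\{\bar x\}$ for words $x$ of the special shape $(\alpha,1_{\mathbb G}^{m})$. Two elementary facts will be used throughout: $1_{\mathbb G}\otimes\beta=\beta\otimes 1_{\mathbb G}=\beta$ for every $\beta\in Irr(\mathbb G)$, and $\overline{1_{\mathbb G}}=1_{\mathbb G}$, so that $\bar\alpha\neq 1_{\mathbb G}$. For (1): a word lies in $S=M\setminus E_2$ precisely when it is nonempty with at least one letter $\neq 1_{\mathbb G}$; among such words, those starting with a letter $\neq 1_{\mathbb G}$ are exactly $G_1$ and those starting with $1_{\mathbb G}$ are exactly $E_3=E_1\setminus E_2$, and these two families are disjoint and exhaust $S$.

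For (2): fix $x\in G_2$ and $y\in E_1$. If $y=\emptyset$ then $\{x\}\circ\{y\}=\{x\}$, and $x\notin E_1$ because $x$ begins with a letter $\neq 1_{\mathbb G}$. If $y\neq\emptyset$ then $y$ begins with $1_{\mathbb G}$; consider a summand $\omega(z)\subset\omega(x)\otimes\omega(y)$ arising from a splitting $x=u,t$, $y=\bar t,v$ as in Theorem~\ref{fusionwr}. If $t\neq\emptyset$ its last letter is the last letter of $x$, which is $\neq 1_{\mathbb G}$ since $x\in G_2$, so $\bar t$ begins with a letter $\neq 1_{\mathbb G}$ and cannot be a prefix of $y$; hence $t=\emptyset$, $u=x$, $v=y$, and $z$ is either the concatenation $(x,y)$ or the fusion of the last letter of $x$ with the first letter of $y$. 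As the last letter of $x$ is $\neq 1_{\mathbb G}$, fusing it against the first letter of $y$ leaves it unchanged, and in both cases $z$ begins with the first letter of $x$, which is $\neq 1_{\mathbb G}$; thus $z\notin E_1$.

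The computation behind (3)--(5) is as follows. Let $x=(\alpha,1_{\mathbb G}^{m})$ with $m\geq 1$ and write an arbitrary $y\in S$ as $y=(1_{\mathbb G}^{q},\gamma_1,\dots,\gamma_s,1_{\mathbb G}^{q'})$ with $s\geq 1$ and $\gamma_1,\gamma_s\neq 1_{\mathbb G}$ (possible since $y$ is not a power of $1_{\mathbb G}$); assume moreover $m\geq|y|$, so that $m>q$ and $m>q'$. Because $x$ ends in $1_{\mathbb G}$, the admissible splittings $x=u,t$, $y=\bar t,v$ are exactly $t=1_{\mathbb G}^{j}$ with $0\leq j\leq q$ -- the ``wrap-around'' split $t=x$ is impossible since $|\bar x|=m+1>|y|$ -- and then $u=(\alpha,1_{\mathbb G}^{m-j})$ with $m-j\geq 1$. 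A short case check, using $1_{\mathbb G}\otimes(\cdot)=(\cdot)$ at each fusion junction, shows that every element of $\{x\}\circ\{y\}$ has the shape $(\alpha,1_{\mathbb G}^{r},\gamma_1,\dots,\gamma_s,1_{\mathbb G}^{q'})$ for some $r\geq 0$: it begins with $\alpha$ and keeps the same trailing block $1_{\mathbb G}^{q'}$ as $y$, and when $q=0$ the only values occurring are $r=m$ (concatenation) and $r=m-1$ (fusion). Conjugating everything yields the mirror statement: with $\bar x=(1_{\mathbb G}^{m},\bar\alpha)$, if a word $z$ ends in a letter $\neq 1_{\mathbb G}$ then every element of $\{z\}\circ\{\bar x\}$ ends in $\bar\alpha$ and still begins with the first letter of $z$.

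Then (3)--(5) follow. For (3): apply the computation with $x=x_t=(\alpha,1_{\mathbb G}^{2t-1})$ and $y\in G_1$ (so $q=0$); each element of $\{x_t\}\circ G_1$ has the form $(\alpha,1_{\mathbb G}^{r},\dots)$ with its $(r+2)$-th letter $\neq 1_{\mathbb G}$ and $r\in\{2t-2,2t-1\}$, so $r$ is determined by the word, and the sets $\{2t-2,2t-1\}$ for $t=1,2,3$ are pairwise disjoint, hence so are the sets $\{x_t\}\circ G_1$. For (4): $y\in G_2$ gives $q=q'=0$, so each element of $\{x_t\}\circ\{y\}$ begins with $\alpha$ and ends with $\gamma_s\neq 1_{\mathbb G}$; the mirror statement then shows that each element of $\{x_t\}\circ G_2\circ\{\bar x_t\}$ begins with $\alpha$ and ends with $\bar\alpha$, both $\neq 1_{\mathbb G}$, hence lies in $G_2$. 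For (5): take $x=(\alpha)$ if $G=\emptyset$, and otherwise $x=(\alpha,1_{\mathbb G}^{m})\in S$ with $m=1+\max_{y\in G}|y|$; then $m\geq|y|$ for each $y\in G$, so the computation and its mirror apply and every element of $\{x\}\circ G\circ\{\bar x\}$ begins with $\alpha$ and ends with $\bar\alpha$, hence lies in $G_2$. The one genuinely delicate step is the case analysis in the central computation -- enumerating the admissible splittings, excluding the wrap-around split $t=x$ (which is precisely what forces $m\geq|y|$ in (5), and is automatic from $y\in G_2$ in (4)), and verifying that fusion against a $1_{\mathbb G}$ is harmless, so that the leading $\alpha$ and the tail of $y$ survive; everything else is formal.
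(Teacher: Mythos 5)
Your proof is correct and follows essentially the same route as the paper: in each part one enumerates the admissible splittings $x=u,t$, $y=\bar t,v$ allowed by Theorem~\ref{fusionwr} and tracks the first and last letters of the resulting summands, using that fusing against $1_{\mathbb G}$ is harmless. The only difference is organizational — you package (3)--(5) into a single computation of $\{(\alpha,1_{\mathbb G}^m)\}\circ\{y\}$ plus its mirror, where the paper writes out each tensor product separately — and your treatment of (2) is in fact slightly more careful than the paper's, since you show the only admissible splitting has $t=\emptyset$ rather than merely $u\neq\emptyset$.
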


\begin{proof}
\begin{enumerate}[(1)]
\item This assertion is obvious.
\item Let $(\alpha,\dots, \alpha' ) \in G_2$, i.e. $\alpha \neq 1_{\mathbb G} \neq \alpha'$. Then $\omega(\alpha,\dots, \alpha' ) \otimes \omega(\emptyset) = \omega(\alpha,\dots, \alpha' ).$
Also, for $(1_{\mathbb G},\dots) \in E_1$ and for every $t \in M$ such that $(\alpha,\dots, \alpha' ) = (u,t)$ and $(1_{\mathbb G},\dots) = (\bar{t},v)$, we get $u \neq \emptyset$ as $\alpha' \neq 1_{\mathbb G}$. Hence $(u,v)$ and $(u.v)$ start in $\alpha \neq 1_{\mathbb G}$ and by the fusion rules of $\mathbb G \wr_* S_N^+$ (Theorem \ref{fusionwr}), we have $(G_2 \circ E_1) \cap E_1 = \emptyset$.
\item Let $(\beta,\dots), (\gamma,\dots),(\delta,\dots)$ be words in $G_1$ such that $ \beta, \gamma, \delta \neq 1_{\mathbb G}$ are non-trivial irreducible corepresentations of $ \mathbb G$. Using Theorem \ref{fusionwr} once more, we obtain
\begin{align*}
\omega(x_1) \otimes \omega(\beta,\dots) &= \omega(\alpha, 1_{\mathbb G}, \beta, \dots ) \oplus \omega(\alpha, \beta, \dots), \\
\omega(x_2) \otimes \omega(\gamma,\dots) &= \omega(\alpha, 1_{\mathbb G}^3, \gamma, \dots ) \oplus \omega(\alpha, 1_{\mathbb G}^2, \gamma, \dots),\\
\omega(x_3) \otimes \omega(\delta,\dots) &= \omega(\alpha, 1_{\mathbb G}^5, \delta, \dots ) \oplus \omega(\alpha,1_{\mathbb G}^4, \delta, \dots).
\end{align*}
Thus, if $s \neq t$, any direct summand appearing in the tensor product of $\omega(x_s)$ and a corepresentation indexed by a word in $G_1$ does not appear as a direct summand of $\omega(x_t)$ and a corepresentation indexed by a word in $G_1$. Hence $(\{ x_t \} \circ G_1 )\cap (\{ x_s \} \circ G_1) = \emptyset,$ whenever $t \neq s$.
\item Now we consider an element $ (\beta, \dots, \beta') \in G_2$ where $\beta, \beta' \neq 1_{\mathbb G}$. Then we get
\begin{align*}
\omega(x_1) \otimes \omega(\beta, \dots, \beta') \otimes \omega(\bar{x}_1) &= \omega(\alpha,1_{\mathbb G} ) \otimes \omega(\beta, \dots, \beta') \otimes \omega(1_{\mathbb G}, \bar{\alpha}) \\
&=  \omega(\alpha,1_{\mathbb G},\beta, \dots, \beta',1_{\mathbb G}, \bar{\alpha}) \oplus \omega(\alpha, 1_{\mathbb G}, \beta, \dots, \beta', \bar{\alpha})\\
& \ \ \ \ \ \oplus \omega(\alpha, \beta, \dots, \beta',1_{\mathbb G}, \bar{\alpha}) \oplus \omega(\alpha, \beta, \dots, \beta', \bar{\alpha}). 
\end{align*}
As the words indexing the direct summands appearing on the righthand side neither start nor end in $1_{\mathbb G}$, we obtain $\{ x_1 \} \circ G_2 \circ \{ \bar{x}_1 \} \subset G_2,$ and a similar computation for $x_2$ and $x_3$ proves $\bigcup_{t=1}^3 \{ x_t \} \circ G_2 \circ \{ \bar{x}_t \} \subset G_2.$
\item Consider $x = (\alpha, 1_{\mathbb G}^k)$, where $k = \max \{|y|, \ y \in G \} +1$. For $y \in G \subset S$ we can write $y = (1_{\mathbb G}^{l-1}, h_l, \dots, h_{l'}, 1_{\mathbb G}^{m-l'})$, where $m > 1, \ 1 \leq l \leq l' \leq m$ and $h_l, h_{l'} \neq 1_{\mathbb G}$. Again, by using the fusion rules of Theorem \ref{fusionwr} we get
\begin{align*}
\omega(x) \otimes \omega(y) 
&= \bigoplus_{t=0}^{2(l-1)} \omega(\alpha, 1_{\mathbb G}^{k+l-1-t}, h_l, \dots, h_{l'}, 1_{\mathbb G}^{m-l'} )
\end{align*}
and hence
\begin{align*}
\omega(x) \otimes \omega(y) \otimes \omega(\bar{x}) =  \bigoplus_{t=0}^{2(l-1)} \bigoplus_{s=0}^{2(m-l')} \omega(\alpha, 1_{\mathbb G}^{k+l-1-t}, h_l, \dots, h_{l'}, 1_{\mathbb G}^{k+m-l' -s}, \bar{\alpha} ).
\end{align*}
As all directs summands appearing in this decomposition are indexed by words in $G_2$, it follows that $ \{ x \} \circ G \circ \{ \bar{x} \} \subset G_2. $
\end{enumerate}
\end{proof}

As in \cite{Lem2}, we will adapt the "modified Powers method" of T. Banica in \cite{Ban97}, where the simplicity of $C^*_r(U_N^+)$ is proven. The support $\supp(z)$ of an element $z \in \Pol(\mathbb G \wr_* S_N^+)$ is the smallest subset of $M$, such that $z$ can be written as a linear combination of coefficients of elements $\omega(x), \ x \in \supp(z)$. Banica's crucial result for our proof is the following:

\begin{prop}[\cite{Ban97}, Proposition 8] \label{BanProp8}
Let $\mathbb G$ be a CMQG of Kac type and let $Irr(\mathbb G) = C \sqcup D$ be a partition of $Irr(\mathbb G)$ into non-empty sets $C, D$. Moreover, let $y_1,y_2,y_3 \in Irr(\mathbb G)$ such that $(y_t \circ D) \cap (y_s \circ D) = \emptyset$, if $t \neq s$. Then there is a unital linear map $T: C_r^*(\mathbb G) \to C_r^*(\mathbb G) $ with the following properties:
\begin{enumerate}[(1)]
\item There is a finite family $(a_i)$ in $\Pol(\mathbb G)$ such that $T(z) = \sum_i a_i z a_i^*$ for all $z \in C_r^*(\mathbb G)$.
\item $T$ is $\tau$-preserving for any trace $\tau \in C_r^*(\mathbb G)^*$.
\item For all self-adjoint $z \in \Pol(\mathbb G)$ with $ (\supp(z) \circ C) \cap C = \emptyset$, we have $ \| T(z) \|_r \leq 0.95 \|z \|_r $ and $ \supp(T(z)) \subset \bigcup_{i=1}^3 y_i \circ \supp(z) \circ \bar{y}_i. $
\end{enumerate}
\end{prop}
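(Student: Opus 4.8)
The plan is to take $T$ to be a \emph{Powers-type averaging map} built from the three corepresentations $u^{y_1},u^{y_2},u^{y_3}$: set
\[
T(z)\;=\;\frac13\sum_{t=1}^{3}\frac{1}{d_{y_t}}\sum_{i,j=1}^{d_{y_t}} u^{y_t}_{ij}\,z\,(u^{y_t}_{ij})^{*},
\]
which is of the form $T(z)=\sum_{m}a_m\,z\,a_m^{*}$ with $(a_m)$ the finite family $\bigl(\tfrac{1}{\sqrt3\sqrt{d_{y_t}}}\,u^{y_t}_{ij}\bigr)_{t,i,j}\subset\Pol(\mathbb G)$; being a finite sum of compressions, $T$ is norm continuous and restricts to $C_r^{*}(\mathbb G)$, giving (1). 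For (2) one uses the unitarity relations $(u^{y_t})^{*}u^{y_t}=1=u^{y_t}(u^{y_t})^{*}$: the first gives $\sum_m a_m^{*}a_m=1$, whence $\tau(T(z))=\sum_m\tau(a_m^{*}a_m z)=\tau(z)$ for every trace $\tau$, and the second gives $\sum_m a_m a_m^{*}=1$, so that $T$ is in addition unital.

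For the second half of (3), recall that for $\mathbb G$ of Kac type the conjugate $\overline{u^{y_t}}$ is the unitary corepresentation $u^{\bar y_t}$, so each $(u^{y_t}_{ij})^{*}$ is a coefficient of $u^{\bar y_t}$. Hence, if $z$ is a coefficient of $u^{\beta}$ with $\beta\in\supp(z)$, then $u^{y_t}_{ij}\,z\,(u^{y_t}_{ij})^{*}$ is a coefficient of $u^{y_t}\otimes u^{\beta}\otimes u^{\bar y_t}$, hence a linear combination of coefficients of its irreducible summands, each of which lies in $\{y_t\}\circ\{\beta\}\circ\{\bar y_t\}$ (with $A\circ B$ here meaning the set of irreducibles occurring in $u^{\alpha}\otimes u^{\beta}$, $\alpha\in A,\beta\in B$). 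By linearity, $\supp(T(z))\subset\bigcup_{t=1}^{3}y_t\circ\supp(z)\circ\bar y_t$.

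The estimate $\|T(z)\|_r\le 0.95\|z\|_r$ is the real content, and the plan is to run \emph{Powers' method}. I would pass to the GNS representation $\pi_h$ on $L^2(\mathbb G)$, so that $\|T(z)\|_r=\|\pi_h(T(z))\|$, and since $z=z^{*}$ and $T$ is $*$-preserving it suffices to bound $|\langle\pi_h(T(z))\zeta,\zeta\rangle|$ over unit vectors $\zeta$. I would decompose $L^2(\mathbb G)=H_C\oplus H_D$ into the closed spans of the coefficients $u^{\gamma}_{ij}$ with $\gamma\in C$, respectively $\gamma\in D$; this is an orthogonal decomposition since those coefficients form an orthogonal basis of $L^2(\mathbb G)$. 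Two facts then drive the argument. First, $(\supp(z)\circ C)\cap C=\emptyset$ says precisely that left multiplication by $z$ carries $H_C$ into $H_D$, since a product of a coefficient of $u^{\beta}$ ($\beta\in\supp(z)$) and a coefficient of $u^{\gamma}$ ($\gamma\in C$) is a coefficient of $u^{\beta}\otimes u^{\gamma}$, supported in $\supp(z)\circ C\subset D$. Second, $(y_s\circ D)\cap(y_t\circ D)=\emptyset$ for $s\ne t$ says that the subspaces obtained by left-multiplying $H_D$ by coefficients of $u^{y_t}$ are pairwise orthogonal across $t=1,2,3$. Writing $T=\tfrac13(P_{1}+P_{2}+P_{3})$ for the three summands above and expanding $\pi_h(T(z))\zeta$, one keeps track of which pieces $H_{\gamma}$ the resulting vectors occupy; the support condition on $z$ together with the $y_t$-disjointness forces most of them to be mutually orthogonal, and what is left is the combinatorial count underlying the Powers inequality, producing an explicit constant strictly below $1$ — essentially the $n=3$ case of the bound $\tfrac{2\sqrt{n-1}}{n}$, i.e. $\tfrac{2\sqrt2}{3}<0.95$.

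I expect this last step to be the main obstacle. The subtle point is that the support hypothesis on $z$ only controls the $H_C$-component of the auxiliary vectors produced by conjugating with the $u^{y_t}_{ij}$, so reducing to an almost-free configuration of three mutually orthogonal pieces, and extracting the quantitative contraction for it, requires careful bookkeeping of how the coefficients of $u^{y_t}$, $z$ and $u^{\bar y_t}$ compose on $L^2(\mathbb G)$ — this is exactly Banica's computation in \cite{Ban97}, which genuinely needs $n=3$: with only two shifts the constant is $1$ and no contraction results.
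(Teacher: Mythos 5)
This proposition is not proved in the paper at all: it is imported verbatim as Proposition~8 of \cite{Ban97}, so the paper's own ``proof'' is the citation. Measured against that, you have done strictly more, and what you have done is correct: the averaging map $T(z)=\tfrac13\sum_{t}d_{y_t}^{-1}\sum_{i,j}u^{y_t}_{ij}z(u^{y_t}_{ij})^{*}$ is (up to indexing conventions) Banica's, your verification of (1), of unitality and trace-preservation from the two unitarity relations $\sum_i(u^{y_t}_{ij})^{*}u^{y_t}_{ik}=\delta_{jk}$ and $\sum_j u^{y_t}_{ij}(u^{y_t}_{kj})^{*}=\delta_{ik}$ (both available because $\mathbb G$ is Kac, so $\overline{u^{y_t}}$ is unitary), and of the support inclusion $\supp(T(z))\subset\bigcup_t y_t\circ\supp(z)\circ\bar y_t$ are all sound. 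Likewise your two structural observations --- that $(\supp(z)\circ C)\cap C=\emptyset$ forces $\pi_h(z)H_C\subset H_D$, and that $(y_t\circ D)\cap(y_s\circ D)=\emptyset$ makes the subspaces $L^2(y_t\circ D)$ pairwise orthogonal --- are exactly the two inputs Banica feeds into the Powers estimate.

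The gap is where you yourself locate it: the inequality $\|T(z)\|_r\le 0.95\|z\|_r$ is described but not derived. Concretely, what is missing is the verification that the three operators $\pi_h(P_t(z))$, where $P_t(z)=d_{y_t}^{-1}\sum_{i,j}u^{y_t}_{ij}z(u^{y_t}_{ij})^{*}$, satisfy the hypotheses of the abstract Powers lemma: one must exhibit mutually orthogonal projections $q_1,q_2,q_3$ with $(1-q_t)\pi_h(P_t(z))(1-q_t)=0$, after which $\bigl\|\tfrac13\sum_t\pi_h(P_t(z))\bigr\|\le\tfrac{2\sqrt2}{3}\max_t\|P_t(z)\|\le\tfrac{2\sqrt2}{3}\|z\|_r<0.95\|z\|_r$ follows since each $P_t$ is unital completely positive. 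The natural candidate $q_t=$ projection onto $L^2(y_t\circ D)$ is made admissible by the disjointness hypothesis, but the off-diagonal condition $(1-q_t)\pi_h(P_t(z))(1-q_t)=0$ is not visible coefficient by coefficient; it requires using the full sum over $i,j$, e.g.\ by writing $P_t(z)=V_t^{*}(\pi_h(z)\otimes 1)V_t$ for the isometry $V_t\xi=d_{y_t}^{-1/2}\sum_{i,j}\pi_h(u^{y_t}_{ij})^{*}\xi\otimes e_{ij}$ and tracking how $V_t$ moves the decomposition $L^2(\mathbb G)=H_C\oplus H_D$ through the fusion rules. Since you defer precisely this step to ``Banica's computation'', your argument is a correct road map rather than a self-contained proof; as a justification at the level the paper itself operates (a pointer to \cite{Ban97}), it is adequate, but if the intent was to reprove the proposition, this quantitative core still has to be written out.
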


We are now ready to prove the first part of Theorem \ref{maintheorem}.

\begin{lem} \label{main1}
Let $\mathbb G$ be a CMQG of Kac type. Then, $C_r^*(\mathbb G \wr_* S_N^+)$ is simple for all $N \geq 8$.

\end{lem}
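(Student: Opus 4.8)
\subsection*{Proof proposal}

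The plan is to carry out the ``modified Powers method'' and show that every nonzero closed two-sided ideal $I \subset C_r^*(\mathbb G \wr_* S_N^+)$ contains the unit, which is equivalent to simplicity. If $\mathbb G = \mathbb C$, then $\mathbb G \wr_* S_N^+ = S_N^+$ and the statement is Brannan's theorem \cite{Bra2}, so I may assume $|Irr(\mathbb G)| \geq 2$, which is exactly the standing hypothesis of Lemma \ref{propstab}. Fix $0 \neq x \in I$ and put $z := x^*x \in I$, so $z \geq 0$. Since $\mathbb G \wr_* S_N^+$ is of Kac type by Proposition \ref{Kacwr}, the Haar state $h$ is a faithful tracial state on $C_r^*(\mathbb G \wr_* S_N^+)$ (it extends to the faithful trace of the finite von Neumann algebra $L^{\infty}(\mathbb G \wr_* S_N^+)$, whose GNS vector is separating), hence $h(z) > 0$ and, after rescaling, $h(z) = 1$. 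Combining $h = h_{\bar{\mathcal E}^{ \| \cdot \|_r }} \circ P$ with Proposition \ref{iso} and Corollary \ref{simplecor}, the element $P(z)$ lies in $\bar{\mathcal E}^{ \| \cdot \|_r } \cong C_r^*(S_N^+)$ and satisfies $\tau(P(z)) = h(z) = 1$, where $\tau$ denotes the unique tracial state of $\bar{\mathcal E}^{ \| \cdot \|_r }$; in particular $P(z) \neq 0$.

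Given $\varepsilon > 0$, I will produce $q \in I$ with $\|q - 1\|_r < \varepsilon$ as $q = T^n \Psi_0 \Theta(z)$, where $\Theta, \Psi_0, T$ are unital completely positive maps on $C_r^*(\mathbb G \wr_* S_N^+)$, each of the form $a \mapsto \sum_i b_i a b_i^*$ with $b_i \in C_r^*(\mathbb G \wr_* S_N^+)$ (so that they send $I$ into $I$). First, since $\bar{\mathcal E}^{ \| \cdot \|_r }$ is simple with a unique tracial state (Corollary \ref{simplecor}) it has the Dixmier property, so there are unitaries $v_l \in \bar{\mathcal E}^{ \| \cdot \|_r }$ and weights $\mu_l \geq 0$ with $\sum_l \mu_l = 1$ such that $\Theta := \sum_l \mu_l\, v_l(\cdot)v_l^*$ satisfies $\|\Theta(P(z)) - \tau(P(z))1\|_r = \|\Theta(P(z)) - 1\|_r < \varepsilon/2$; moreover, as $P$ is a conditional expectation onto $\bar{\mathcal E}^{ \| \cdot \|_r }$ it is an $\bar{\mathcal E}^{ \| \cdot \|_r }$-bimodule map, so $\Theta$ leaves $\ker P = \bar{\mathcal S}^{ \| \cdot \|_r }$ invariant and in particular $\Theta(z - P(z)) \in \bar{\mathcal S}^{ \| \cdot \|_r }$. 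Next I approximate $\Theta(z - P(z))$ within $\varepsilon/4$ in $\|\cdot\|_r$ by a self-adjoint $w \in \Pol(\mathbb G \wr_* S_N^+)$ with $\supp(w) \subset S$ finite, choose $y \in S$ via Lemma \ref{propstab}(5) so that $\{y\} \circ \supp(w) \circ \{\bar y\} \subset G_2$, and let $(u_{ij})_{1 \leq i,j \leq d}$ be the matrix coefficients of the irreducible corepresentation $\omega(y)$; then $\Psi_0 := \tfrac1d \sum_{i,j} u_{ij}(\cdot)u_{ij}^*$ is unital (because $\omega(y)$ is unitary) and, since $\overline{\omega(y)} = \omega(\bar y)$ in the Kac-type setting, $\supp(\Psi_0(a)) \subset \{y\} \circ \supp(a) \circ \{\bar y\}$ for $a \in \Pol(\mathbb G \wr_* S_N^+)$, whence $\supp(\Psi_0(w)) \subset G_2$. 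Finally I apply Banica's Proposition \ref{BanProp8} to the Kac-type quantum group $\mathbb G \wr_* S_N^+$, whose irreducibles are labelled by $M$, with the partition $M = E_1 \sqcup G_1$ (so $C = E_1$, $D = G_1$) and with $y_1, y_2, y_3 := x_1, x_2, x_3$ as in Lemma \ref{propstab}: the hypothesis $(x_t \circ D) \cap (x_s \circ D) = \emptyset$ for $t \neq s$ is precisely Lemma \ref{propstab}(3), any self-adjoint element supported in $G_2$ satisfies $(\supp \circ C) \cap C = \emptyset$ by Lemma \ref{propstab}(2), and by Lemma \ref{propstab}(4) one has $\supp(T(\cdot)) \subset \bigcup_{t=1}^3 x_t \circ G_2 \circ \bar x_t \subset G_2$, so iteration is legitimate and $\|T^n(\Psi_0(w))\|_r \leq 0.95^n \|\Psi_0(w)\|_r \leq 0.95^n \|w\|_r$; I fix $n$ with this $< \varepsilon/4$.

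Now $q := T^n \Psi_0 \Theta(z) \in I$, and since $T^n$ and $\Psi_0$ are unital and contractive while $\Theta(1) = 1$,
\[ q - 1 \;=\; T^n \Psi_0\big(\Theta(P(z)) - 1\big) \;+\; T^n \Psi_0 \Theta\big(z - P(z)\big), \]
where the first summand has norm at most $\|\Theta(P(z)) - 1\|_r < \varepsilon/2$ and the second has norm at most $\|T^n\Psi_0(w)\|_r + \|\Theta(z - P(z)) - w\|_r < \varepsilon/4 + \varepsilon/4 = \varepsilon/2$. Hence $\|q - 1\|_r < \varepsilon$; since $\varepsilon$ was arbitrary and $I$ is closed, $1 \in I$, so $I = C_r^*(\mathbb G \wr_* S_N^+)$. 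As $I$ was an arbitrary nonzero closed two-sided ideal, $C_r^*(\mathbb G \wr_* S_N^+)$ is simple.

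I expect the main obstacle to be the interaction between the two pieces $\bar{\mathcal E}^{ \| \cdot \|_r }$ and $\bar{\mathcal S}^{ \| \cdot \|_r }$: Banica's map only contracts elements whose support lies inside $G_2$, and the conjugations $\Psi_0$ and $T$ move the $\bar{\mathcal E}^{ \| \cdot \|_r }$-part of an element out of $\bar{\mathcal E}^{ \| \cdot \|_r }$ in an uncontrolled way, which is why one must first ``clean out'' $P(z)$ inside $\bar{\mathcal E}^{ \| \cdot \|_r }$ using the Dixmier property (via Corollary \ref{simplecor}) before touching the $\bar{\mathcal S}^{ \| \cdot \|_r }$-part. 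A further technical nuisance is that the Dixmier unitaries $v_l$ need not be polynomial, so the finite-support calculus is unavailable for $\Theta(z - P(z))$ and one must re-approximate it by a polynomial element before Lemma \ref{propstab}(5) and Proposition \ref{BanProp8} apply; keeping all error terms under control through this re-approximation, and verifying that the combinatorial inclusions of Lemma \ref{propstab} match exactly the hypotheses of Proposition \ref{BanProp8}, is where the real bookkeeping sits.
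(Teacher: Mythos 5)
Your argument is correct, and its combinatorial core (the conditional expectation $P$ onto $\bar{\mathcal E}^{\|\cdot\|_r}$, Lemma \ref{propstab}, and the iterated application of Banica's Proposition \ref{BanProp8} to a self-adjoint element supported in $G_2$) is exactly the paper's. Where you diverge is in how simplicity and unique trace of $C_r^*(S_N^+)\cong\bar{\mathcal E}^{\|\cdot\|_r}$ are exploited. The paper argues on the level of the ideal: $P(J)$ is a closed two-sided ideal of the simple algebra $\bar{\mathcal E}^{\|\cdot\|_r}$, so it is either $\{0\}$ (whence $J\subset\ker P\subset\ker h$ and $J=\{0\}$ by faithfulness of $h$) or all of $\bar{\mathcal E}^{\|\cdot\|_r}$, in which case one picks $y\in J$ with $P(y)=1$ exactly, writes $y=1-z$ with $z\in\bar{\mathcal S}^{\|\cdot\|_r}$, and only needs to push $\big\|\sum_i b_iz b_i^*\big\|_r$ below $1$ to produce an invertible element of $J$. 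You instead work with a single positive element $z=x^*x$, normalize $h(z)=1$, and use the Dixmier property of $\bar{\mathcal E}^{\|\cdot\|_r}$ (via Haagerup--Zsid\'o, from simplicity plus unique trace) to average $P(z)$ to the scalar $1$, exploiting that $P$ is an $\bar{\mathcal E}^{\|\cdot\|_r}$-bimodule map so that the averaging does not disturb membership in $\ker P$; you then shrink the $\ker P$-component to $0$ rather than merely below $1$. Your route avoids the case distinction and the need to realize $1$ exactly in $P(J)$, and since your $w$ is self-adjoint by construction you do not need the paper's splitting of $z'$ into real and imaginary parts; the price is the extra input of the Dixmier property and the re-approximation of $\Theta(z-P(z))$ by a finitely supported polynomial element (harmless, since $\ker P$ is by construction the norm closure of the span of coefficients of $\omega(x)$, $x\in S$, and $S=\bar S$ lets you symmetrize). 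Both arguments are sound; the paper's is marginally more elementary, yours is marginally more streamlined.
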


\begin{proof}
If $|Irr(\mathbb G)| =1$, we have $\mathbb G \wr_* S_N^+ = S_N^+$ whose reduced $C^*$-algebra is simple by \cite[Corollary 5.12]{Bra2}. Hence we may assume $|Irr(\mathbb G)| \geq 2$. We put $\mathcal E' = \overline{\mathcal E}^{ \| \cdot \|_r}$ and $\mathcal S' = \overline{\mathcal S}^{ \| \cdot \|_r}$, where $ \| \cdot \|_r$ denotes the norm on $C_r^*(\mathbb G \wr_* S_N^+)$. Let $J \triangleleft C_r^*(\mathbb G \wr_* S_N^+)$ be an ideal. We have to prove that either $J = \{ 0 \}$ or $J= C_r^*(\mathbb G \wr_* S_N^+)$. Recall that there is a unique conditional expectation $P: C_r^*(\mathbb G \wr_* S_N^+) \twoheadrightarrow \mathcal E'$. Hence it holds that $v P(x) w = P(vxw) \in P(J) \ \ (v,w \in \mathcal E', \ x \in J ).$
Moreover, as $P$ is realised by the compression by the projection $p \in B(L^2(\mathbb G \wr_* S_N^+))$ onto the $ \| \cdot \|_2$-closure of $\mathcal E'$, it follows that $P(J) = pJp \subset \mathcal E'$ is norm-closed and thus, $P(J)$ is a closed two-sided ideal. By the simplicity of $\mathcal E'$ (see Corollary \ref{simplecor}), we obtain $P(J) = \{ 0 \}$ or $P(J) = \mathcal E'$.
\begin{case}
Let $P(J) = \{ 0 \}$, i.e. $J \subset \ker P = \mathcal S'$. As $1_{\mathbb G \wr_* S_N^+} \notin \mathcal S'$, Schur's orthogonality relations (cf. \cite{tim}) imply $h(y) = 0$ for all $y \in \mathcal S'$, i.e. $ \mathcal S' \subset \ker h$. For $y \in J$, we get $y^* y \in J$ as $J$ is an ideal and therefore $h(y^*y) = 0$, since $J \subset \mathcal S' \subset \ker h$. But as $h$ is faithful on $C_r^*(\mathbb G \wr_* S_N^+)$, this means $y = 0$ and hence $J = \{ 0 \}$.
\end{case}
\begin{case}
Now, let $P(J) = \mathcal E'$. Since $1_{\mathbb G \wr_* S_N^+} \in \mathcal E'$, there is $y \in J$ such that $P(y) = 1_{\mathbb G \wr_* S_N^+}$ and hence we can write $y = P(y) - z = 1-z$ with $z \in \mathcal S'$. We choose $z_0 \in \mathcal S$ such that $\| z- z_0 \|_r < \frac{1}{2}$. By putting $G = \supp(z_0) \subset S$ in Lemma \ref{propstab}, we find $x \in S$ such that $\{x\} \circ \supp (z_0) \circ \{ \bar{x} \} \subset G_2. $ Let us denote the coefficients of the irreducible unitary corepresentation $\omega(x)$ by $\tilde{a}_{ij}, \ 1 \leq i,j \leq \dim \omega(x)$. Then, it follows that the element $z' = \sum_{i,j} a_{ij} z_0 a_{ij}^*$, 
where $a_{ij} = (\dim \omega(x))^{- \frac{1}{2}} \  \tilde{a}_{ij}$, fulfills 
\[ \supp(z') \subset \{x\} \circ \supp (z_0) \circ \{ \bar{x} \} \subset G_2, \]
and since $ G_2 = \bar{G}_2$, the same holds for the self-adjoint elements $Re (z') = \frac{1}{2} (z' + (z')^*)$ and $ Im (z') = \frac{1}{2i} (z' - (z')^*) $, i.e. $\supp (Re (z')), \ \supp (Im (z')) \subset G_2. $ \\
We also note that the mapping $T_0: w \mapsto \sum_{i,j} a_{ij} w a_{ij}^*$ is trace-preserving and completely positive and unital, since $\omega(x)$ is unitary. \\
We will now apply Proposition \ref{BanProp8} to $Re(z')$ and $Im(z')$. To do so, we note that the monoid $M$ indexing the irreducible corepresentations of $\mathbb G \wr_* S_N^+$ can be partitioned as $M = E_1 \sqcup G_1$ and by Lemma \ref{propstab}, the words $x_1 = (\alpha, 1_{\mathbb G}), x_2 = (\alpha, 1_{\mathbb G}^3), x_3 = (\alpha, 1_{\mathbb G}^5)$ satisfy
$  (\{ x_t \} \circ G_1 )\cap (\{ x_s \} \circ G_1) = \emptyset$, whenever $t \neq s$. Furthermore, by part (2) of Lemma \ref{propstab}, we have $(G_2 \circ E_1) \cap E_1 = \emptyset$ and hence
\begin{align*}
 (\supp(Re(z')) \circ E_1) \cap E_1 = \emptyset, \ \ (\supp(Im(z')) \circ E_1) \cap E_1 = \emptyset.
\end{align*}
Thus, by Proposition \ref{BanProp8} there is a unital completely positive trace-preserving map 
$ T_1: C_r^*(\mathbb G \wr_* S_N^+) \to C_r^*(\mathbb G \wr_* S_N^+)$, such that
\begin{itemize}
\item $T_1(w) = \sum_{i} c_i w c_i^*$ for some finite family $(c_i)$ in $\Pol(\mathbb G \wr_* S_N^+)$,
\item $\| T_1( Re(z')) \|_r \leq 0.95 \| Re(z') \|_r $, $\| T_1( Im(z')) \|_r \leq 0.95 \| Im(z') \|_r $,
\item $ \supp (T_1( Re(z'))), \supp (T_1( Im(z'))) \subset \bigcup_{t=1}^3 \{ x_t \} \circ G_2 \circ \{ \bar{x}_t \} \subset G_2 $.
\end{itemize}
Since $T_1( Re(z')), T_1( Im(z'))$ are again self-adjoint with 
\begin{align*}
 \supp (T_1( Re(z'))) \circ E_1) \cap E_1 = \emptyset, \ \
 \supp (T_1( Im(z'))) \circ E_1) \cap E_1 = \emptyset,
\end{align*}
we may apply Proposition \ref{BanProp8} iteratively in order to obtain a finite family $(d_i)$ in \\ $\Pol(\mathbb G \wr_* S_N^+)$ such that 
\[
\big\| \sum_{i} d_i Re(z') d_i^* \big\|_r < \frac{1}{4}, \ 
\big\| \sum_{i} d_i Im(z') d_i^* \big\|_r < \frac{1}{4},
\]
and hence $\big\| \sum_{i} d_i z' d_i^* \big\|_r < \frac{1}{2}.$
By plugging in $z' = \sum_{i,j} a_{ij} z_0 a_{ij}^*$, we get a finite family $(b_i)$ in $\Pol(\mathbb G \wr_* S_N^+)$ such that $\big\| \sum_{i} b_i z_0 b_i^* \big\|_r < \frac{1}{2}. $
We note that the mapping $T:w \mapsto \sum_{i} b_i w b_i^*$ is unital, completely positive and trace-preserving by construction and therefore by the Russo-Dye Theorem we obtain $\| T \| = \| T(1) \| = 1.$
Altogether, the invertibility of the element $\sum_{i} b_i y b_i^* \in J$ follows from the calculation
\begin{align*}
\big\| 1 - \sum_{i} b_i y b_i^* \big\|_r &= \big\| \sum_{i} b_i (1-y) b_i^* \big\|_r 
= \big\| \sum_{i} b_i z b_i^* \big\|_r \\
&\leq \big\| \sum_{i} b_i z_0 b_i^* \big\|_r + \big\| \sum_{i} b_i (z-z_0) b_i^* \big\|_r \\
&\leq \big\| \sum_{i} b_i z_0 b_i^* \big\|_r + \| T \| \| z- z_0 \|_r 
< 1.
\end{align*} 
\end{case}
Hence $J = C_r^*(\mathbb G \wr_* S_N^+)$.
\end{proof}

The methods of the last proof also yield:

\begin{lem} \label{main2}
Let $\mathbb G$ be a CMQG of Kac type. Then, $C_r^*(\mathbb G \wr_* S_N^+)$ has a unique tracial state $h$ for all $N \geq 8$. In particular, $L^{\infty}(\mathbb G \wr_* S_N^+)$ is a $II_1$-factor for all $N \geq 8$.
\end{lem}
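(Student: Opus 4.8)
The plan is to show that any tracial state $\tau$ on $C_r^*(\mathbb G \wr_* S_N^+)$ must coincide with the Haar state $h$, by running essentially the conditional-expectation-plus-Powers argument of Lemma \ref{main1}, and then to read off that $L^\infty(\mathbb G \wr_* S_N^+)$ is a $II_1$-factor from the uniqueness of the trace. As in Lemma \ref{main1}, the case $|Irr(\mathbb G)| = 1$ reduces to $S_N^+$ and is covered by \cite{Bra2}, so I would assume $|Irr(\mathbb G)| \geq 2$ and keep the notation $\mathcal E' = \overline{\mathcal E}^{\|\cdot\|_r}$, $\mathcal S' = \overline{\mathcal S}^{\|\cdot\|_r}$, together with the conditional expectation $P$ onto $\mathcal E'$ satisfying $\ker P = \mathcal S'$ and $h = h_{\mathcal E'} \circ P$.

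First I would pin down $\tau$ on $\mathcal E'$: by Proposition \ref{iso} we have $\mathcal E' \cong C_r^*(S_N^+)$, which for $N \geq 8$ carries a unique tracial state (Corollary \ref{simplecor}), so $\tau|_{\mathcal E'}$ is forced to be the Haar state $h_{\mathcal E'} = h|_{\mathcal E'}$. It then suffices to show $\tau$ vanishes on $\mathcal S'$; since $\tau$ is norm-continuous and $\mathcal S$ is a norm-dense $*$-subalgebra of $\mathcal S'$, it is enough to prove $\tau(z) = 0$ for every self-adjoint $z \in \mathcal S$. Given such a $z$, put $G = \supp(z) \subset S$ and use Lemma \ref{propstab}(5) to find $x \in S$ with $\{x\} \circ G \circ \{\bar x\} \subset G_2$. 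Conjugating by the renormalised coefficients $a_{ij} = (\dim\omega(x))^{-1/2}\tilde a_{ij}$ of the unitary corepresentation $\omega(x)$ gives a unital, completely positive, trace-preserving map $T_0 \colon w \mapsto \sum_{i,j} a_{ij} w a_{ij}^*$, and $z' := T_0(z)$ is self-adjoint with $\supp(z') \subset G_2$, hence $(\supp(z') \circ E_1) \cap E_1 = \emptyset$ by Lemma \ref{propstab}(2).

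Next I would feed $z'$ into Banica's Proposition \ref{BanProp8}, applied to $\mathbb G \wr_* S_N^+$, whose irreducible corepresentations are indexed by $M = E_1 \sqcup G_1$ (both pieces non-empty since $|Irr(\mathbb G)| \geq 2$), and to the three elements $x_1, x_2, x_3$ of Lemma \ref{propstab}, whose required disjointness is precisely Lemma \ref{propstab}(3). Because the support of the output stays inside $\bigcup_{t=1}^3 \{x_t\} \circ G_2 \circ \{\bar x_t\} \subset G_2$ by Lemma \ref{propstab}(4), the hypothesis $(\supp(\cdot) \circ E_1) \cap E_1 = \emptyset$ is reproduced, so Proposition \ref{BanProp8} can be iterated; after $n$ steps one obtains a finite family $(b_i)$ in $\Pol(\mathbb G \wr_* S_N^+)$ such that $w \mapsto \sum_i b_i w b_i^*$ is unital and trace-preserving and $\big\| \sum_i b_i z' b_i^* \big\|_r \leq (0.95)^n \|z'\|_r \leq (0.95)^n \|z\|_r$. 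Since $\tau$ is preserved by $T_0$ and by each Banica map, $\big| \tau\big( \sum_i b_i z' b_i^* \big) \big| = |\tau(z)| \leq (0.95)^n \|z\|_r$ for all $n$, whence $\tau(z) = 0$. Combining the two parts, $\tau(a) = \tau(P(a)) = h_{\mathcal E'}(P(a)) = h(a)$ for all $a$, so $h$ is the unique tracial state and is faithful.

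Finally, $L^\infty(\mathbb G \wr_* S_N^+) = \pi_h(C_r^*(\mathbb G \wr_* S_N^+))''$ carries the faithful normal tracial extension $\bar h$ of $h$, so it is a finite von Neumann algebra, and it is infinite-dimensional because $\Pol(\mathbb G \wr_* S_N^+) \subset L^\infty$ is (there are infinitely many inequivalent $\omega(x)$, e.g.\ the $\omega(1_{\mathbb G}^k)$, $k \in \mathbb N$). For factoriality, a nontrivial central projection $p$ would satisfy $0 < \bar h(p) < 1$, and $a \mapsto \bar h(pa)/\bar h(p)$ would restrict to a tracial state on $C_r^*(\mathbb G \wr_* S_N^+)$, hence equal $h$ by uniqueness; evaluating the resulting identity $\bar h(pa) = \bar h(p)\bar h(a)$ (extended to $L^\infty$ by normality) at $a = 1-p$ gives $0 = \bar h(p)(1-\bar h(p))$, a contradiction. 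Thus $L^\infty(\mathbb G \wr_* S_N^+)$ is a finite, infinite-dimensional factor, i.e.\ a $II_1$-factor. The only genuinely new ingredient beyond Lemma \ref{main1} is this last factoriality deduction; the rest is bookkeeping, the one delicate part being exactly what parts (2)--(4) of Lemma \ref{propstab} are designed for, namely that the support-disjointness hypothesis of Proposition \ref{BanProp8} survives the iteration.
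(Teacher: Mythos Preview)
Your proof is correct and follows essentially the same approach as the paper: show $\tau|_{\mathcal E'}=h|_{\mathcal E'}$ by uniqueness of the trace on $C_r^*(S_N^+)$, then use the trace-preserving Powers-type maps from the proof of Lemma~\ref{main1} to force $\tau=0$ on self-adjoint elements of $\mathcal S$. You are slightly more explicit than the paper in two harmless ways---you spell out the $|Irr(\mathbb G)|=1$ case and the factoriality deduction (which the paper leaves implicit), and you note that for self-adjoint $z$ the element $z'=T_0(z)$ is already self-adjoint, so the Re/Im split is unnecessary---but the substance is the same.
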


\begin{proof}
We will show that any trace state $\tau$ on $C_r^*(\mathbb G \wr_* S_N^+)$ coincides with the Haar state $h$. To do so, let $z = z^* \in \mathcal S$. On the one hand we have $h(z) = 0$ by Schur's orthogonality relations and on the other hand, for all $\epsilon > 0$ we can repeat the method of the proof of Theorem \ref{main1}
to find a finite family $(b_i)$ in $\Pol(\mathbb G \wr_* S_N^+)$ such that $ \big\| \sum_{i} b_i z b_i^* \big\|_r < \epsilon.$
Furthermore, the mapping $T:w \mapsto \sum_{i} b_i w b_i^*$ is unital, completely positive and trace-preserving which implies that $\tau(z) = \tau(T(z)) < \epsilon.$
As this holds for all $\epsilon > 0$, we have $\tau(z) = 0$. Since every element in $\mathcal S$ can be written as a linear combination of two self-adjoint elements, $\tau$ and $h$ coincide on $\mathcal S$. \\
For $z \in \mathcal E$, we have $\tau(z) = h(z)$ by the uniqueness of the trace on $\mathcal E' \cong C_r^*(S_N^+)$. Hence, $\tau$ and $h$ coincide on $\Pol(\mathbb G \wr_* S_N^+)$ and by continuity also on $C_r^*(\mathbb G \wr_* S_N^+)$.
\end{proof}

\begin{rem}
Up to this point, we have only considered the case where $N \geq 8$. Although the cases $4 \leq N \leq 7$ remain open, it is easy to see that factoriality will in general not hold if $1 \leq N \leq 3$. For example, we can simply consider $S_N^+ = S_N^+ \wr_* S_1^+ = S_1^+ \wr_* S_N^+ = S_N$ for $N=2,3$. Of course, since in this case $S_N^+ = S_N$ is commutative, its associated von Neumann algebra is not a factor. If $N=2$, the fact that $L^{\infty}(\mathbb G \wr_* S_2)$ is not a factor does not even depend on the choice of $\mathbb G$, as $L^{\infty}(S_2)$ is contained in the center of $L^{\infty}(\mathbb G \wr_* S_2)$.
\end{rem}

\section{Fullness of $L^{\infty}(\mathbb G \wr_* S_N^+)$ }

In this section, we will prove that the $II_1$-factor $L^{\infty}(\mathbb G \wr_* S_N^+)$ is full, i.e. it does not have property $ \Gamma$ whenever $N \geq 8$. We recall the following definition.

\begin{definition}
Let $(M, \tau)$ be a $II_1$-factor with unique faithful normal trace $\tau$. 
\begin{enumerate}[(1)]
\item A sequence $(x_n)$ in $M$ is said to be \emph{asymptotically central}, if $ \| x_n y - y x_n \|_{L^2(M)} \to 0$ for all  $y \in M .$
\item A sequence $(x_n)$ in $M$ is said to be \emph{asymptotically trivial}, if $\| x_n  - \tau(x_n) 1 \|_{L^2(M)} \to 0 .$
\item The $II_1$-factor $M$ is called \emph{full}, if every bounded asymptotically central sequence is asymptotically trivial. If $M$ is not full, we say that $M$ has \emph{property} $\Gamma$.
\end{enumerate}
\end{definition}

Our proof is a simple generalization of Lemeux's proof of the fullness of $L^{\infty}(H_N^+(\Gamma))$. We will denote the $L^2(\mathbb G \wr_* S_N^+)$-norm by $\| \cdot \|_2$. 

\begin{notation}
Let $M = \langle Irr (\mathbb G) \rangle$ denote the monoid indexing the irreducible corepresentations of $\mathbb G \wr_* S_N^+$. For a subset $B \subset M $ we denote
\[ L^2(B) = \overline{\spann \{ \Lambda_h(x) \ ; \ \supp(x) \subset B  \}}^{\| \cdot \|_2} \subset L^2(\mathbb G \wr_* S_N^+), \]
where $\Lambda_h$ is the GNS-map with respect to the Haar state $h$.
\end{notation}

We notice now that the GNS-Hilbert space of $\mathbb G \wr_* S_N^+$ decomposes as the orthogonal sum $L^2(\mathbb G \wr_* S_N^+) = L^2(E_2) \oplus L^2(S),$
where $E_2= \bigcup_{k \in \mathbb N} \{1_{\mathbb G}^k \}$ is the subset of words with only $1_{\mathbb G}$ as a letter and $S = M\backslash E_2$. In particular, we have
$ L^{\infty}(\mathbb G \wr_* S_N^+) = \overline{\mathcal E}^{wo} \oplus \overline{\mathcal S}^{wo}$, where $\overline{\mathcal E}^{wo}$  (respectively $\overline{\mathcal S}^{wo}$) denotes the closure of $\mathcal E$ (respectively $\mathcal S$) (c.f. Notation \ref{morenot}) in the weak operator topology.

\begin{prop} \label{Hilfsprop2}
Let $N \geq 8$ be an integer. If every bounded asymptotically central sequence in $ \mathcal S$ is trivial, then $L^{\infty}(\mathbb G \wr_* S_N^+)$ is full.
\end{prop}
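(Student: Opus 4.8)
The plan is to exploit the decomposition $L^{\infty}(\mathbb G\wr_*S_N^+)=\overline{\mathcal E}^{wo}\oplus\overline{\mathcal S}^{wo}$ together with the normal, $h$-preserving conditional expectation $P$ onto $\overline{\mathcal E}^{wo}$: given a bounded asymptotically central sequence I would split it into its $\mathcal E$-part and its $\mathcal S$-part, dispose of the $\mathcal E$-part by the known fullness of $L^{\infty}(S_N^+)$ for $N\geq 8$ (via the identification $\overline{\mathcal E}^{wo}\cong L^{\infty}(S_N^+)$ coming from Proposition \ref{iso} and \cite{Bra2}), and dispose of the $\mathcal S$-part by the hypothesis, after replacing it by a bounded approximant inside $\mathcal S$.

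Concretely, let $(x_n)$ be a bounded asymptotically central sequence in the $II_1$-factor $M:=L^{\infty}(\mathbb G\wr_*S_N^+)$ (cf.\ Lemma \ref{main2}), normalised so that $\|x_n\|\leq 1$, and write $x_n=e_n+s_n$ with $e_n:=P(x_n)\in\overline{\mathcal E}^{wo}$ and $s_n:=x_n-e_n\in\overline{\mathcal S}^{wo}$; both sequences are bounded, with $\|e_n\|\leq 1$ and $\|s_n\|\leq 2$. Since $P$ is an $\overline{\mathcal E}^{wo}$-bimodule map and is a $\|\cdot\|_2$-contraction (being the compression by the orthogonal projection onto $L^2(E_2)$), for every $y\in\overline{\mathcal E}^{wo}$ we get
\[ \|ye_n-e_ny\|_2=\|P(yx_n-x_ny)\|_2\leq\|yx_n-x_ny\|_2\longrightarrow 0, \]
so $(e_n)$ is a bounded asymptotically central sequence in the full $II_1$-factor $\overline{\mathcal E}^{wo}$. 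Hence $(e_n)$ is asymptotically trivial there, and because $P$ preserves the trace $h=\tau$ we have $\tau(e_n)=\tau(x_n)$; therefore $\|e_n-\tau(x_n)1\|_2\to 0$.

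Next, an asymptotically trivial sequence is asymptotically central in all of $M$, since $\|ye_n-e_ny\|_2\leq 2\|y\|\,\|e_n-\tau(e_n)1\|_2$ for every $y\in M$. Thus $(s_n)=(x_n)-(e_n)$ is again bounded and asymptotically central in $M$, now lying in $\overline{\mathcal S}^{wo}$ with $\tau(s_n)=\tau(x_n)-\tau(e_n)=0$. By Kaplansky's density theorem I would choose $s_n'\in\mathcal S$ with $\|s_n'\|\leq\|s_n\|$ and $\|s_n'-s_n\|_2<\tfrac1n$; a routine perturbation estimate (controlling $\|(s_n'-s_n)y\|_2$ and $\|y(s_n-s_n')\|_2$ by $\|y\|\,\|s_n'-s_n\|_2$) shows that $(s_n')$ is still bounded and asymptotically central, now as a sequence in $\mathcal S$. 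The hypothesis then forces $(s_n')$ to be asymptotically trivial, i.e.\ $\|s_n'-\tau(s_n')1\|_2\to 0$; together with $\tau(s_n')\to\tau(s_n)=0$ this yields $\|s_n'\|_2\to 0$ and hence $\|s_n\|_2\to 0$. Finally
\[ \|x_n-\tau(x_n)1\|_2\leq\|e_n-\tau(x_n)1\|_2+\|s_n\|_2\longrightarrow 0, \]
so $(x_n)$ is asymptotically trivial and $M$ is full.

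The delicate points — rather than a genuine obstacle — are twofold. First, one must verify that $(e_n)$ really is asymptotically central \emph{inside} $\overline{\mathcal E}^{wo}$ and with respect to the trace of that subfactor: this is exactly why it matters that $P$ is simultaneously a conditional expectation (hence bimodular over its range) and the $L^2$-orthogonal projection, so that it is a $\|\cdot\|_2$-contraction, and that the trace on $\overline{\mathcal E}^{wo}$ is the restriction of $h$. Second, the Kaplansky step must produce \emph{bounded} elements of the $*$-algebra $\mathcal S$ itself (not merely of its weak-operator closure) approximating $s_n$ in $\|\cdot\|_2$, since the hypothesis is stated for sequences in $\mathcal S$; here one uses that $\mathcal S$ is a $*$-subalgebra whose sot-closure coincides with $\overline{\mathcal S}^{wo}$. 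Everything else is elementary manipulation of the $L^2$-norm.
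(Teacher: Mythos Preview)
Your argument follows the same strategy as the paper's: split $x_n$ into its $\mathcal E$- and $\mathcal S$-parts via the conditional expectation $P$, use fullness of $L^\infty(S_N^+)$ (via Proposition~\ref{iso} and \cite{Bra2}) for the first part, and invoke the hypothesis for the second. The only structural difference is the order of operations: the paper first reduces, by Kaplansky density on the $*$-algebra $\Pol(\mathbb G\wr_*S_N^+)$, to bounded asymptotically central sequences $(x_n)$ already lying in $\Pol$, so that the splitting $x_n=y_n+z_n$ automatically gives $z_n\in\mathcal S$ with $\|z_n\|\leq 2\|x_n\|$; you instead split first and attempt Kaplansky on $\mathcal S$ at the end.

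That last step has a small gap. Despite the wording of Notation~\ref{morenot}, $\mathcal S$ is \emph{not} closed under multiplication: for any $\alpha\neq 1_{\mathbb G}$ the fusion rules of Theorem~\ref{fusionwr} give $\omega(\emptyset),\,\omega(1_{\mathbb G})\subset\omega(\alpha)\otimes\omega(\bar\alpha)$, so products of elements of $\mathcal S$ acquire components in $\mathcal E$. (This is also why the decomposition $\Pol=\mathcal E\oplus\mathcal S$ only makes sense when $\mathcal S$ is read as the \emph{linear span}, not the generated $*$-algebra.) Hence your direct appeal to Kaplansky on $\mathcal S$ is not justified as stated. The fix is easy and keeps your ordering intact: apply Kaplansky to $\Pol$ to get $\tilde s_n\in\Pol$ with $\|\tilde s_n\|\leq\|s_n\|$ and $\|\tilde s_n-s_n\|_2<1/n$, then set $s_n':=(I-P)\tilde s_n\in\mathcal S$; since $I-P$ is the orthogonal $L^2$-projection onto $L^2(S)$ and has operator norm at most $2$ on $L^\infty$, this $s_n'$ is bounded and $\|\cdot\|_2$-close to $s_n$. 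Equivalently, just follow the paper and perform the density reduction at the very beginning, which sidesteps the issue entirely.
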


\begin{proof}
By a straightforward density argument, it suffices to show that every bounded asymptotically central sequence $(x_n)_{n \in \mathbb N}$ in $\Pol(\mathbb G \wr_* S_N^+)$ is asymptotically trivial. For all $n \in \mathbb N$ we write $x_n = y_n + z_n$, where $y_n \in \mathcal E$ and $z_n \in \mathcal S$ and we denote the orthogonal projection onto $L^2(E_2)$ by $ P: L^2(\mathbb G \wr_* S_N^+) \to L^2(E_2).$ Recall that $P|_{L^{\infty}(\mathbb G \wr_* S_N^+)}: L^{\infty}(\mathbb G \wr_* S_N^+) \to \overline{\mathcal E}^{wo}$ is the conditional expectation on $\overline{\mathcal E}^{wo}$.
Since the restriction to $\overline{\mathcal E}^{wo}$ of the Haar state on $L^{\infty}(\mathbb G \wr_* S_N^+)$ is the Haar state on $\overline{\mathcal E}^{wo}$, we have
\begin{align*}
\| y_n a - a y_n \|_{L^2(\overline{\mathcal E}^{wo})} = \| y_n a - a y_n \|_{L^2(E_2)} 
= \| P(x_n) a - a P(x_n) \|_2 
\leq \|x_n a - a x_n \|_2
\to 0
\end{align*}
for all $a \in \overline{\mathcal E}^{wo} $. Hence, the sequence $(y_n)_{n \in \mathbb N}$ is asymptotically central in $\overline{\mathcal E}^{wo}$ and it is clear that the isomorphism of compact quantum groups $\overline{\mathcal E}^{\| \cdot \|_r} \cong C^*_r(S_N^+) $ in Proposition \ref{iso} extends to an isomorphism $\overline{\mathcal E}^{wo} \cong L^{\infty}(S_N^+)$. \\
By \cite{Bra2}, $L^{\infty}(S_N^+)$ is full and hence $(y_n)_{n \in \mathbb N}$ is asymptotically trivial. In particular, it is asymptotically central in $L^{\infty}(\mathbb G \wr_* S_N^+)$. This implies that the sequence $(z_n)_{n \in \mathbb N} = (x_n-y_n)_{n \in \mathbb N}$ is bounded asymptotically central in $L^{\infty}(\mathbb G \wr_* S_N^+)$ and by assumption it is asymptotically trivial. Hence $(x_n)_{n \in \mathbb N}$ is asymptotically trivial in $L^{\infty}(\mathbb G \wr_* S_N^+)$.
\end{proof}

The last proposition shows that we only need to deal with bounded asymptotically trivial sequences in $\mathcal S$. Recall that in Notation \ref{lotnot} we defined $G_1  = \bigcup_{\alpha \neq 1_{\mathbb G} \atop \alpha \in Irr(\mathbb G)} \{ (\alpha, \dots) \}$ as the subset of words starting with any $\alpha \neq 1_{\mathbb G}$ and $E_3 = E_1 \backslash E_2$ as the subset of words starting in $1_{\mathbb G}$ but containing a letter different than $1_{\mathbb G}$. Note that $ S = E_3 \sqcup G_1$.

\begin{lem} \label{hilfslem1}
Let $ 1_{\mathbb G} \neq \alpha \in Irr (\mathbb G)$. With the notation above, we have
\begin{enumerate}[(1)]
\item $\{ (\alpha) \} \circ E_3 \circ \{ (\bar{\alpha}) \} \subset G_1$,
\item $\{ 1_{\mathbb G}^i \} \circ G_1 \circ \{ 1_{\mathbb G}^i \} \subset E_3 \text{ for } i=2,4$,
\item $ \big( \{ 1_{\mathbb G}^2 \} \circ G_1 \circ \{ 1_{\mathbb G}^2 \} \big) \cap \big( \{ 1_{\mathbb G}^4 \} \circ G_1 \circ \{ 1_{\mathbb G}^4 \} \big) = \emptyset. $ 
\end{enumerate}
\end{lem}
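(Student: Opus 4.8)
<br>

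The plan is to verify all three claims directly from the fusion rules of Theorem~\ref{fusionwr}, using repeatedly that $1_{\mathbb G}$ is the unit for the tensor product on $Irr(\mathbb G)$, so that $1_{\mathbb G}\otimes\gamma=\gamma=\gamma\otimes 1_{\mathbb G}$ for every $\gamma\in Irr(\mathbb G)$. Consequently, whenever a fusion $\omega(u.v)$ occurs at a boundary where $u$ ends in $1_{\mathbb G}$ or $v$ begins with $1_{\mathbb G}$, only a single summand is produced and the non-trivial letter at that boundary is left unchanged; this is what keeps the blocks of $1_{\mathbb G}$'s and the non-trivial letters ``visible'' throughout the computations.

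For (1), I would first compute $\omega((\alpha))\otimes\omega(y)$ for $y\in E_3$. Since $y$ starts with $1_{\mathbb G}\neq\bar\alpha$ and $(\alpha)$ has length one, the only admissible splitting $(\alpha)=u,t$, $y=\bar t,v$ is $t=\emptyset$, giving $\omega((\alpha),y)\oplus\omega((\alpha).y)$; as $\alpha\otimes 1_{\mathbb G}=\{\alpha\}$, both summands are indexed by words starting with $\alpha$, and both have length $\geq 2$ (here one uses $|y|\geq 2$, which holds for every $y\in E_3$). The same mechanism applied on the right then shows that for any word $x$ that starts with $\alpha$ and has length $\geq 2$, every summand of $\omega(x)\otimes\omega((\bar\alpha))$ is again indexed by a word starting with $\alpha$: the splitting $t=\emptyset$ produces $(x,\bar\alpha)$ and $(x.\bar\alpha)$ (the latter still of length $\geq 2$ since $|x|\geq 2$), and the splitting $t=(\alpha)$ — possible only if $x$ ends in $\alpha$ — produces $x$ with its final letter deleted, which still starts with $\alpha$. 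Composing the two steps gives $\{(\alpha)\}\circ E_3\circ\{(\bar\alpha)\}\subset G_1$.

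For (2), fix $w=(\alpha,w_2,\dots,w_p)\in G_1$ with $\alpha\neq 1_{\mathbb G}$. In $\omega(1_{\mathbb G}^i)\otimes\omega(w)$ any splitting $1_{\mathbb G}^i=u,t$, $w=\bar t,v$ forces $\bar t=1_{\mathbb G}^{|t|}$ to be a prefix of $w$, hence $t=\emptyset$, so $\omega(1_{\mathbb G}^i)\otimes\omega(w)=\omega(1_{\mathbb G}^i,w)\oplus\omega(1_{\mathbb G}^{i-1},\alpha,w_2,\dots,w_p)$. Both indexing words have $j$ leading copies of $1_{\mathbb G}$ with $j\in\{i-1,i\}$, in particular $j\geq 1$ since $i\in\{2,4\}$, and both still contain the non-trivial letter $\alpha$, so they lie in $E_3$. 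Multiplying such a word $x$ on the right by $\omega(1_{\mathbb G}^i)$ affects only its trailing block of $1_{\mathbb G}$'s (at worst fusing the last non-trivial letter of $x$ with a $1_{\mathbb G}$, which leaves that letter unchanged); the leading block of $1_{\mathbb G}$'s and the letter $\alpha$ persist, so every summand again lies in $E_3$. This proves $\{1_{\mathbb G}^i\}\circ G_1\circ\{1_{\mathbb G}^i\}\subset E_3$ for $i=2,4$.

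For (3), I would extract from the computation in (2) the sharper statement that every word in $\{1_{\mathbb G}^i\}\circ G_1\circ\{1_{\mathbb G}^i\}$ has \emph{exactly} $j$ leading copies of $1_{\mathbb G}$ with $j\in\{i-1,i\}$: the left factor contributes $i$ of them, or $i-1$ after the single boundary fusion with the leading (non-trivial) letter of the word from $G_1$, and nothing downstream can increase this leading block, since the letter immediately after it is non-trivial. For $i=2$ this count is $1$ or $2$, for $i=4$ it is $3$ or $4$, and $\{1,2\}\cap\{3,4\}=\emptyset$, whence the two sets are disjoint. The only delicate point in the whole argument — and the step I would watch most carefully — is the bookkeeping of the fusion rules so that no summand escapes the target set via a non-trivial boundary fusion; this never happens precisely because every boundary fusion occurring here is of the trivial form $1_{\mathbb G}\otimes\gamma$ or $\gamma\otimes 1_{\mathbb G}$, and the length bounds established in (1) prevent a word from collapsing into $E_2$.
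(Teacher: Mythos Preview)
Your argument is correct and follows essentially the same route as the paper: a direct computation with the fusion rules of Theorem~\ref{fusionwr}, exploiting that boundary fusions with $1_{\mathbb G}$ are trivial and that the first non-trivial letter is never touched. The only cosmetic difference is that in (1) you factor the triple product as two successive tensor products while the paper expands it in one display; the content and the key observations (all summands start with $\alpha$ in (1), the leading $1_{\mathbb G}$-block has length $i$ or $i-1$ in (2)--(3)) are identical.
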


\begin{proof}
\begin{enumerate}[(1)]
\item Let $t \in E_3$, i.e. $t = (1_{\mathbb G}, \beta_1,\dots,\beta_l)$ with $l \geq 1$ and $\beta_i \neq 1_{\mathbb G}$ for at least one $i \in \{1, \dots, l \}$. From Theorem \ref{fusionwr} it follows that
\begin{align*}
\omega(\alpha) \otimes \omega(t) \otimes \omega(\bar{\alpha})
&= \omega(\alpha, 1_{\mathbb G}, \beta_1,\dots,\beta_l, \bar{\alpha}) \oplus \delta_{\beta_l, \alpha} \  \omega(\alpha, 1_{\mathbb G}, \beta_1,\dots,\beta_{l-1}) \\
& \ \ \ \oplus \ \ \omega(\alpha, \beta_1,\dots,\beta_l, \bar{\alpha}) \oplus \delta_{\beta_l, \alpha} \ \omega(\alpha, \beta_1,\dots,\beta_{l-1}) \\
& \ \ \ \oplus \bigoplus_{ \gamma \subset \beta_l \otimes \bar{\alpha}} \omega(\alpha, 1_{\mathbb G}, \beta_1,\dots,\beta_{l-1}, \gamma) \\
& \ \ \ \oplus \bigoplus_{ \gamma \subset \beta_l \otimes \bar{\alpha}} \omega(\alpha, \beta_1,\dots,\beta_{l-1}, \gamma). 
\end{align*}
Since all of the words appearing in this direct sum start in $\alpha \neq 1_{\mathbb G}$, we obtain $\{ (\alpha) \} \circ E_3 \circ \{ (\bar{\alpha}) \} \subset G_1.$
\item Let $(\beta, \dots)$ be a word in $G_1$, i.e. $\beta \neq 1_{\mathbb G}$ and let $i \in \{ 2,4\}$. We have
\[ \omega(1^i_{\mathbb G}) \otimes \omega(\beta,\dots) = \omega(1^i_{\mathbb G},\beta,\dots) \oplus \omega(1^{i-1}_{\mathbb G},\beta,\dots). \]
Since $\beta \neq 1_{\mathbb G}$, the tensor product
$ \big(\omega(1^i_{\mathbb G},\beta,\dots) \oplus \omega(1^{i-1}_{\mathbb G},\beta,\dots) \big) \otimes \omega(1^i_{\mathbb G}) $
will only produce subcorepresentations of the form $\omega(1^i_{\mathbb G},\beta,\dots)$ and $\omega(1^{i-1}_{\mathbb G},\beta,\dots)$. This proves assertion (2).
\item This follows immediately from the above calculations since corepresentations appearing as direct summands of 
$ \omega(1^2_{\mathbb G}) \otimes \omega(\beta,\dots) \otimes \omega(1^2_{\mathbb G}) \ \ (\beta \neq 1_{\mathbb G}) $
are indexed by words starting in $1_{\mathbb G}$ or $1^2_{\mathbb G}$ and corepresentations appearing as direct summands of 
$ \omega(1^4_{\mathbb G}) \otimes \omega(\beta,\dots) \otimes \omega(1^4_{\mathbb G}) \ \ (\beta \neq 1_{\mathbb G}) $
are indexed by words starting in $1^3_{\mathbb G}$ or $1^4_{\mathbb G}$. 
\end{enumerate}
\end{proof}

Note that in the previous lemma we assume that $|Irr(\mathbb G)| \geq 2$. The case $|Irr(\mathbb G)| = 1$ corresponds to $S_N^+$ for which we already have the desired fullness result. \\
Recall that, since the coefficients of the irreducible unitary corepresentations of $\mathbb G \wr_* S_N^+$ form an orthogonal basis of $L^2(\mathbb G \wr_* S_N^+)$ and since $ S = E_3 \sqcup G_1$, the Hilbert spaces $H_1 := L^2(E_3)$ and $H_2 := L^2(G_1)$ are orthogonal subspaces of $L^2(\mathbb G \wr_* S_N^+)$. We put $H := H_1 \oplus H_2$ and  
\begin{align*}
H^0_1 := \spann \{ \Lambda_h(x) \ ; \ \supp(x) \subset E_3  \}, \ \
H^0_2 := \spann \{ \Lambda_h(x) \ ; \ \supp(x) \subset G_1 \}.
\end{align*}
By definition we have $H_1 = \overline{H^0_1}^{\| \cdot \|_2}$ and $H_2 = \overline{H^0_2}^{\| \cdot \|_2}$. Moreover, for a word $t \in M$ we set $d_t = \dim \omega(t)$. Note that, up to this point, we have considered the $d_t$-dimensional corepresentation $\omega(t)$ as an element in $ \Pol(\mathbb G \wr_* S_N^+) \otimes M_{d_t}(\mathbb C)$. However, since we need to distinguish the representation spaces of different corepresentations, we will consider $\omega(t)$ as an element in $ \Pol(\mathbb G \wr_* S_N^+) \otimes B(H_t)$ where $H_t$ is an $d_t$-dimensional Hilbert space. We also put
$ K_t := L^2(B(H_t), \tfrac{1}{d_t} \Tr(\cdot)), $
the GNS-space of $B(H_t)$ with respect to the normalized trace $\frac{1}{d_t} \Tr(\cdot)$. Note that $\omega(t)$ can act on $K_t$ by left or right multiplication. Hence, we can now define a linear map
\[ v_t: H \to H \otimes K_t, \ \ a \mapsto \omega(t)(a \otimes 1) \omega(t)^*. \]
The map $v_t$ is an isometry since the embedding $H \to H \otimes K_t, \ a \mapsto a \otimes 1 $ is isometric and $\omega(t)$ is unitary. We will denote the norm on $L^2(\mathbb G \wr_* S_N^+)$ by $\| \cdot \|_2$ and the correponding inner product by $ \langle \cdot, \cdot \rangle_2$. 

\begin{prop} \label{Hilfsprop3}
Let $ 1_{\mathbb G} \neq \alpha \in Irr (\mathbb G)$. For all $z \in \mathcal S$, we have
\[ \| z \|_2 \leq 14 \max \{ \| z\otimes 1 - v_{(\alpha)}z \|_{H \otimes K_{(\alpha)}}, \| z \otimes 1 - v_{1_{\mathbb G}^2}z \|_{H \otimes K_{1_{\mathbb G}^2}}, \| z \otimes 1 - v_{1_{\mathbb G}^4}z \|_{H \otimes K_{1_{\mathbb G}^4}}  \}. \]
\end{prop}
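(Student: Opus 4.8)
The plan is to split $z$ along the orthogonal decomposition $H=L^2(E_3)\oplus L^2(G_1)$, writing $z=z_1+z_2$, to bound $\|z_1\|_2$ and $\|z_2\|_2$ separately in terms of
\[\delta:=\max\{\|z\otimes 1-v_{(\alpha)}z\|_2,\ \|z\otimes 1-v_{1_{\mathbb G}^2}z\|_2,\ \|z\otimes 1-v_{1_{\mathbb G}^4}z\|_2\},\]
and then to solve the resulting linear system. Throughout one uses that every $v_t$ is an isometry, so that $v_t^{*}$ is a contraction, and that $v_t^{*}(b\otimes 1)=\tfrac1{d_t}\sum_{k,l}\omega(t)_{kl}^{*}\,b\,\omega(t)_{kl}$ (a short computation from the traciality of the Haar state, i.e.\ Proposition~\ref{Kacwr}), so that $\supp\big(v_t^{*}(b\otimes 1)\big)\subset\{\bar t\}\circ\supp(b)\circ\{t\}$; note that $\bar t=t$ for $t=1_{\mathbb G}^2,1_{\mathbb G}^4$.

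To bound $\|z_1\|_2$ I would pair $z\otimes 1-v_{(\alpha)}z$ with $z_1\otimes 1$ in $H\otimes K_{(\alpha)}$. Here $\langle z_2\otimes 1,z_1\otimes 1\rangle_2=0$ and $\langle v_{(\alpha)}z_1,z_1\otimes 1\rangle_2=0$, the latter because $\supp(v_{(\alpha)}z_1)\subset\{(\alpha)\}\circ E_3\circ\{(\bar\alpha)\}\subset G_1$ by Lemma~\ref{hilfslem1}(1) while $z_1\otimes 1$ is supported in $E_3$. Thus $\langle z\otimes 1-v_{(\alpha)}z,\,z_1\otimes 1\rangle_2=\|z_1\|_2^2-\langle z_2,\,v_{(\alpha)}^{*}(z_1\otimes 1)\rangle_2$, and Cauchy--Schwarz together with $\|v_{(\alpha)}^{*}(z_1\otimes 1)\|_2\le\|z_1\|_2$ gives $\|z_1\|_2\le\delta+\|z_2\|_2$.

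To bound $\|z_2\|_2$ both maps $v_{1_{\mathbb G}^2}$ and $v_{1_{\mathbb G}^4}$ are needed. Fix $i\in\{2,4\}$ and put $T_i:=\{1_{\mathbb G}^i\}\circ G_1\circ\{1_{\mathbb G}^i\}$, so $T_i\subset E_3$ by Lemma~\ref{hilfslem1}(2). Pairing $z\otimes 1-v_{1_{\mathbb G}^i}z$ with $z_2\otimes 1$, the term $\langle v_{1_{\mathbb G}^i}z_2,z_2\otimes 1\rangle_2$ vanishes since $\supp(v_{1_{\mathbb G}^i}z_2)\subset T_i\subset E_3$ is disjoint from $G_1$, leaving $\|z_2\|_2^2-\langle z_1,\,v_{1_{\mathbb G}^i}^{*}(z_2\otimes 1)\rangle_2$. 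As $z_2$ is supported in $G_1$, the vector $v_{1_{\mathbb G}^i}^{*}(z_2\otimes 1)$ is supported in $T_i$, so writing $P_{T_i}$ for the projection onto $L^2(T_i)$ the cross-term equals $\langle P_{T_i}z_1,\,v_{1_{\mathbb G}^i}^{*}(z_2\otimes 1)\rangle_2$, of modulus at most $\|P_{T_i}z_1\|_2\,\|z_2\|_2$; hence $\|z_2\|_2\le\delta+\|P_{T_i}z_1\|_2$ for $i=2$ and $i=4$. By Lemma~\ref{hilfslem1}(3) the sets $T_2$ and $T_4$ are disjoint, so $\|P_{T_2}z_1\|_2^2+\|P_{T_4}z_1\|_2^2\le\|z_1\|_2^2$ and therefore $\min\{\|P_{T_2}z_1\|_2,\|P_{T_4}z_1\|_2\}\le\tfrac1{\sqrt 2}\|z_1\|_2$, which yields $\|z_2\|_2\le\delta+\tfrac1{\sqrt 2}\|z_1\|_2$.

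Substituting this into $\|z_1\|_2\le\delta+\|z_2\|_2$ gives $\|z_1\|_2\le(4+2\sqrt 2)\delta$, hence $\|z_2\|_2\le(3+2\sqrt 2)\delta$, and finally $\|z\|_2\le\|z_1\|_2+\|z_2\|_2=(7+4\sqrt 2)\delta<14\,\delta$. The one genuinely delicate point — and the reason all three maps are needed — is the cross-term $\langle z_1,\,v_{1_{\mathbb G}^i}^{*}(z_2\otimes 1)\rangle_2$: with a single map one obtains only $\|z_2\|_2\le\delta+\|z_1\|_2$, which does not close against $\|z_1\|_2\le\delta+\|z_2\|_2$, and it is precisely the disjointness $T_2\cap T_4=\emptyset$ from Lemma~\ref{hilfslem1}(3) that forces the coefficient of $\|z_1\|_2$ below $1$ and makes the system solvable. (When $z$ lies entirely in $L^2(E_3)$ or in $L^2(G_1)$ the argument degenerates to the trivial bound $\|z\|_2\le\delta$.)
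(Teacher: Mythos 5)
Your proof is correct and is essentially the argument the paper relies on: its own ``proof'' is just a pointer to \cite{Lem2}, whose estimate is in turn the standard $14$-bound of Vaes and Vergnioux \cite{VaVer}, built exactly as you do from the orthogonal splitting $z=z_1+z_2$ along $S=E_3\sqcup G_1$, the two pairing inequalities, and the $1/\sqrt{2}$ gain coming from the disjointness of $\{1_{\mathbb G}^2\}\circ G_1\circ\{1_{\mathbb G}^2\}$ and $\{1_{\mathbb G}^4\}\circ G_1\circ\{1_{\mathbb G}^4\}$ recorded in Lemma \ref{hilfslem1}. Your constants close correctly ($\|z\|_2\le(7+4\sqrt{2})\,\delta<14\,\delta$), so there is nothing to add.
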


\begin{proof}
The proof of this result is exactly the same as in \cite{Lem2}. One only has to use our Lemma \ref{hilfslem1} whenever the author his result \cite[Lemma 3.8]{Lem2}.
\end{proof}
The following corollary concludes the proof of Theorem \ref{maintheorem}.
\begin{cor}
Let $(z_n)_{n \in \mathbb N}$ be a bounded asymptotically central sequence in $\mathcal S$. Then, $(z_n)_{n \in \mathbb N}$ is asymptotically trivial. In particular, $L^{\infty}(\mathbb G \wr_* S_N^+)$ is full for any $N \geq 8$ and any CMQG $\mathbb G$ of Kac type.
\end{cor}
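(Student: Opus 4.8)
The plan is to combine Proposition \ref{Hilfsprop2} with Proposition \ref{Hilfsprop3} to reduce everything to a norm estimate about asymptotically central sequences. By Proposition \ref{Hilfsprop2}, it suffices to show that any bounded asymptotically central sequence $(z_n)_{n \in \mathbb N}$ in $\mathcal S$ is asymptotically trivial, i.e.\ $\| z_n - h(z_n) 1 \|_2 \to 0$. Since every $z_n$ lies in $\mathcal S \subset \ker h$ (by Schur orthogonality, as in the proof of Lemma \ref{main1}), we have $h(z_n) = 0$, so the goal is simply $\| z_n \|_2 \to 0$.

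First I would invoke Proposition \ref{Hilfsprop3}: fixing any $1_{\mathbb G} \neq \alpha \in Irr(\mathbb G)$ (such an $\alpha$ exists when $|Irr(\mathbb G)| \geq 2$; the case $|Irr(\mathbb G)| = 1$ is $S_N^+$, already handled by \cite{Bra2}), we have for every $n$
\[ \| z_n \|_2 \leq 14 \max \big\{ \| z_n \otimes 1 - v_{(\alpha)} z_n \|, \| z_n \otimes 1 - v_{1_{\mathbb G}^2} z_n \|, \| z_n \otimes 1 - v_{1_{\mathbb G}^4} z_n \| \big\}. \]
So it remains to check that each of the three quantities $\| z_n \otimes 1 - v_t z_n \|_{H \otimes K_t}$ tends to $0$ as $n \to \infty$, for $t \in \{ (\alpha), 1_{\mathbb G}^2, 1_{\mathbb G}^4 \}$. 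Unwinding the definition $v_t(a) = \omega(t)(a \otimes 1)\omega(t)^*$, the quantity $\| z_n \otimes 1 - \omega(t)(z_n \otimes 1)\omega(t)^* \|_{H \otimes K_t}$ should, after writing $\omega(t) = (\omega(t)_{ij})$ in coordinates and expanding the Hilbert--Schmidt norm against the normalized trace on $B(H_t)$, reduce to an average of expressions of the form $\| \omega(t)_{ij} z_n - z_n \omega(t)_{ij} \|_2$ (up to the usual normalization by $d_t^{-1/2}$), i.e.\ to commutators of $z_n$ with the finitely many coefficients of $\omega(t)$. Since $(z_n)$ is asymptotically central in $L^\infty(\mathbb G \wr_* S_N^+)$ and each coefficient $\omega(t)_{ij}$ belongs to $\Pol(\mathbb G \wr_* S_N^+) \subset L^\infty(\mathbb G \wr_* S_N^+)$, each such commutator tends to $0$ in $\|\cdot\|_2$, and since there are only finitely many of them for each fixed $t$, the whole expression goes to $0$. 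Taking the maximum over the three fixed words $t$ and using the bounded family, we conclude $\| z_n \|_2 \to 0$.

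Finally, having established that $(z_n)$ is asymptotically trivial, Proposition \ref{Hilfsprop2} yields that every bounded asymptotically central sequence in $L^\infty(\mathbb G \wr_* S_N^+)$ is asymptotically trivial, i.e.\ $L^\infty(\mathbb G \wr_* S_N^+)$ is full and in particular does not have property $\Gamma$; combined with Lemma \ref{main1}, Lemma \ref{main2} and Proposition \ref{Kacwr}, this completes the proof of Theorem \ref{maintheorem}. The main obstacle I anticipate is purely bookkeeping: making the passage from $\| z_n \otimes 1 - v_t z_n \|_{H \otimes K_t}$ to a finite sum of $\|\cdot\|_2$-commutators fully rigorous requires being careful about how $\omega(t)$ acts on $K_t$ by left versus right multiplication and about the identification $K_t = L^2(B(H_t), \tfrac{1}{d_t}\Tr)$; but there is no genuine analytic difficulty once Proposition \ref{Hilfsprop3} is in hand, and indeed one could alternatively observe that $a \mapsto \| a \otimes 1 - v_t a \|$ is $\|\cdot\|_\infty$-to-$\|\cdot\|_2$ continuous on bounded sets and vanishes on $a$ commuting with all coefficients of $\omega(t)$, so that asymptotic centrality immediately forces it to $0$ along $(z_n)$.
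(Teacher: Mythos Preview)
Your proposal is correct and follows essentially the same route as the paper: reduce to $\|z_n\|_2 \to 0$ via Proposition~\ref{Hilfsprop3}, and then use asymptotic centrality with the finitely many coefficients of $\omega(t)$ for $t \in \{(\alpha), 1_{\mathbb G}^2, 1_{\mathbb G}^4\}$ to force each $\|z_n \otimes 1 - v_t z_n\|$ to zero, concluding with Proposition~\ref{Hilfsprop2}. The only cosmetic difference is that the paper rewrites $\|z_n \otimes 1 - v_t z_n\| = \|(z_n \otimes 1)\omega(t) - \omega(t)(z_n \otimes 1)\|$ directly via unitarity of $\omega(t)$ rather than expanding coordinates, and your observation that $h(z_n)=0$ is automatic for $z_n \in \mathcal S$ is actually slightly cleaner than the paper's ``replace $z_n$ by $z_n - h(z_n)1$''.
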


\begin{proof}
Let $(z_n)_{n \in \mathbb N}$ be a bounded asymptotically central sequence in $\mathcal S$, i.e.
\[ \| z_n a - a z_n \|_2 \to 0 \ \ \text{for all} \ a \in L^{\infty}(\mathbb G \wr_* S_N^+). \]
We may assume $h(z_n) = 0$ for all $n \in \mathbb N$ since otherwise we can replace $z_n$ by $z_n - h(z_n)1$. Moreover, let $ 1_{\mathbb G} \neq \alpha \in Irr (\mathbb G)$. We have
\[
\| z_n\otimes 1 - v_{(\alpha)}z_n \|_{H \otimes K_{(\alpha)}} = \| (z_n\otimes 1) \omega((\alpha)) - \omega((\alpha))z_n \|_{H \otimes K_{(\alpha)}} \to 0, \]
and similarly we obtain
$\| z_n\otimes 1 - v_{1_{\mathbb G}^2}z_n \|_{H \otimes K_{1_{\mathbb G}^2}} \to 0, \
\| z_n\otimes 1 - v_{1_{\mathbb G}^4}z_n \|_{H \otimes K_{1_{\mathbb G}^4}}   \to 0,$
whenever $n \to \infty$.
Hence, by Proposition \ref{Hilfsprop3} it follows that $\| z_n \|_2 \to 0 \ (n \to \infty)$, i.e. $(z_n)_{n \in \mathbb N}$ is asymptotically trivial. By Proposition \ref{Hilfsprop2}, fullness of $L^{\infty}(\mathbb G \wr_* S_N^+)$ follows.
\end{proof}

\end{document}